\documentclass[11pt]{article}

\usepackage[a4paper,margin=1in]{geometry}
\usepackage{amsmath,amssymb,amsthm,mathtools,mathrsfs}
\usepackage[T1]{fontenc}
\usepackage{lmodern}
\usepackage{microtype}
\usepackage{enumitem}
\usepackage{authblk}
\usepackage{hyperref}

\hypersetup{colorlinks=true,linkcolor=blue,citecolor=blue,urlcolor=blue}

\newtheorem{theorem}{Theorem}[section]
\newtheorem{proposition}[theorem]{Proposition}
\newtheorem{corollary}[theorem]{Corollary}

\theoremstyle{definition}
\newtheorem{definition}[theorem]{Definition}

\newtheorem{remark}[theorem]{Remark}

\newcommand{\Ucal}{\mathcal{U}}
\newcommand{\Vcal}{\mathcal{V}}
\newcommand{\Ctn}{\mathscr{C}}
\newcommand{\Min}{\mathsf{Min}}
\newcommand{\Lin}{\mathsf{Lin}}
\newcommand{\Prod}{\mathsf{Prod}}
\newcommand{\Dr}{\mathsf{Dr}}
\newcommand{\Zero}{\mathsf{Zero}}
\newcommand{\CTNorm}{\mathbf{CTNorm}}
\newcommand{\FiveTypes}{\mathbf{FiveTypes}}
\newcommand{\NN}{\mathbb{N}}

\title{Ultrafilter Equivalence and Asymptotic Types of Five Classical t-Norms}

\author[1]{Jiang Yang\thanks{Corresponding author. Email: \texttt{Jiangyangdy@126.com(yangjiang@nwu.edu.cn)}.}}
\author[2]{Xiongwei Zhang}
\author[1]{Xin Zhang}

\affil[1]{School of Mathematical Sciences, Guangxi Minzu University, Nanning 530006, China}
\affil[2]{Mathematics and Statistics, Yulin University, Yulin, 719000, China}
\date{}

\begin{document}

\maketitle

\begin{abstract}
We study five classical $t$-norms on the unit interval from the viewpoint of ultrafilter concentration. For a fixed ultrafilter $\mathcal U$ on $[0,1]$, we introduce an equivalence relation identifying two operations whenever they coincide on $A\times A$ for some $A\in\mathcal U$.
We show that their asymptotic behavior is governed by two concentration regimes. In the near-$1$ regime, the five operations determine four distinct ultrafilter-equivalence classes. In the low-value regime, the {\L}ukasiewicz, nilpotent minimum, and drastic $t$-norms collapse to the zero operation.
We encode these reductions in a discrete quotient category and record simple ultrametric models for the two regimes. We further interpret the classification inside classical ultrapowers: the near-$1$ and near-$0$ regimes become exact algebraic phenomena on infinitesimal monads, and saturation yields a compactness principle for countable systems of asymptotic identities. Finally, we indicate how the same viewpoint interacts with residual fuzzy implications generated by $t$-norms.
\end{abstract}

\noindent\textbf{Keywords:} $t$-norms; fuzzy implications; ultrafilter equivalence; asymptotic classification; ultrapower; categorical reduction

\medskip

\noindent\textbf{2020 Mathematics Subject Classification:} Primary 54D80; Secondary 54E35, 03B52, 03C20, 06F05, 18A05.

\section{Introduction}

Triangular norms, or $t$-norms, are basic binary operations in fuzzy logic, many-valued logic, and residuated algebra. Recall that a $t$-norm on $[0,1]$ is a binary operation
\[
T:[0,1]^2\to [0,1]
\]
which is associative, commutative, increasing in each coordinate, and has $1$ as its neutral element.

Among the standard examples, the following five operations occur frequently:
\begin{align*}
T_G(x,y)&=\min\{x,y\}
&&\text{(G\"odel)},\\
T_L(x,y)&=\max\{0,x+y-1\}
&&\text{({\L}ukasiewicz)},\\
T_P(x,y)&=xy
&&\text{(product)},\\
T_{NM}(x,y)&=
\begin{cases}
\min\{x,y\}, & x+y>1,\\
0, & x+y\le 1,
\end{cases}
&&\text{(nilpotent minimum)},\\
T_D(x,y)&=
\begin{cases}
\min\{x,y\}, & \max\{x,y\}=1,\\
0, & \max\{x,y\}<1,
\end{cases}
&&\text{(drastic)}.
\end{align*}
Only the first three are continuous. The nilpotent minimum and the drastic $t$-norm are included here because they are part of the classical list and exhibit important boundary phenomena.

The classical theory of triangular norms is extensive. It includes algebraic and order-theoretic properties, continuity, generators, ordinal sums, comparison and domination, convergence, and applications to many-valued logics and fuzzy set theory; see the monograph of Klement, Mesiar and Pap \cite{KlementMesiarPap}. The present paper does not attempt to add another global classification theorem to this theory. Instead, it asks a more limited question: what remains of these five classical operations when one compares them only on sets that are large with respect to a fixed ultrafilter?

The central idea is to identify two operations when they agree on a set that is large with respect to an ultrafilter. This produces a coarse asymptotic equivalence relation. Such a viewpoint deliberately forgets global distinctions that disappear in a selected endpoint regime, while preserving those distinctions that remain visible asymptotically.

The use of ultrafilters as a language of large sets, generalized limits, and ultraproducts is standard; see Goldbring's monograph \cite{Goldbring}. We use elementary parts of this theory: ultrafilter quantifiers, induced ultrafilters, pushforward ultrafilters, classical ultrapowers, and basic saturation principles. These tools allow us to state the localization, invariance, and nonstandard forms of our equivalence relation cleanly.

There is also a natural connection with the theory of fuzzy implications. Fuzzy implications are one of the central operations in fuzzy logic, and many important families of fuzzy implications are generated from other fuzzy logic operations, especially from $t$-norms via residuation. We refer to the monograph of Baczynski and Jayaram \cite{BaczynskiJayaram} for a systematic account. From this perspective, an asymptotic classification of $t$-norms is relevant not only to fuzzy conjunctions themselves, but also to the endpoint behavior of the fuzzy implications generated by them.

More precisely, we proceed as follows:
\begin{enumerate}[label=\textnormal{(\roman*)}]
\item define an ultrafilter equivalence relation on the family of five classical $t$-norms;
\item analyze the low-value and near-$1$ concentration regimes;
\item prove separation results for the near-$1$ types;
\item record simple ultrametric models for the two regimes;
\item prove a scale-invariance property using pushforward ultrafilters;
\item interpret the classification inside classical ultrapowers and formulate a saturation principle for countable systems of asymptotic identities;
\item explain a residual implication consequence of the near-$1$ classification;
\item encode the resulting identifications in a discrete quotient category.
\end{enumerate}

The low-value regime also clarifies a common informal mistake. The implication
\[
A\subseteq[0,1-\delta]\quad\Longrightarrow\quad T_L=T_{NM}=T_D=0
\]
is false in general. The correct hypothesis is
\[
A\subseteq[0,\delta]\qquad\text{with }\delta\le \frac12.
\]
Near $1$, one must likewise distinguish exact identities from first-order expansions.

\paragraph{Relation with classical structure theory.}
The structure theory of continuous $t$-norms is usually developed through ordinal sums and generators; see, for example, \cite{Hajek,KlementMesiarPap}. From a broader semigroup-theoretic viewpoint, the theorem of Mostert and Shields \cite{MostertShields} is a classical example of a global decomposition theorem for topological semigroups on compact manifolds with boundary.

The present paper should be read in a different, more local-asymptotic sense. We do not use the Mostert--Shields theorem as a technical input for proving a general decomposition theorem for $t$-norms. Rather, our construction is formally analogous to the decomposition philosophy: instead of decomposing an operation globally, we use an ultrafilter to select a large asymptotic regime and then study the induced equivalence class.

Thus the results below are best understood as a finite asymptotic skeleton for the five classical examples, not as a classification theorem for the full class of continuous $t$-norms.

\section{Ultrafilter equivalence}

Let
\[
\Ctn=\{T_G,T_L,T_P,T_{NM},T_D\}
\]
be the family considered in this paper.

\begin{definition}[Ultrafilter equivalence]
Let $\Ucal$ be an ultrafilter on $[0,1]$. For arbitrary binary operations
\[
S,T:[0,1]^2\to[0,1],
\]
we write
\[
S\sim_{\Ucal}T
\]
if there exists $A\in\Ucal$ such that
\[
S(x,y)=T(x,y)\qquad\text{for all }(x,y)\in A\times A.
\]
When no confusion is possible, we restrict this relation to the family $\Ctn$.
\end{definition}

\begin{proposition}
For every ultrafilter $\Ucal$ on $[0,1]$, the relation $\sim_{\Ucal}$ is an equivalence relation on any family of binary operations on $[0,1]$.
\end{proposition}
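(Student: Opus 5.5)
We verify reflexivity, symmetry and transitivity directly from the definition of $\sim_{\Ucal}$, using only two properties of an ultrafilter: $[0,1]\in\Ucal$, and $\Ucal$ is closed under finite intersections. Maximality of $\Ucal$ plays no role, so the argument works verbatim for any filter on $[0,1]$.

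\emph{Reflexivity.} For any binary operation $S$, take $A=[0,1]$. This set belongs to $\Ucal$, and trivially $S(x,y)=S(x,y)$ on $A\times A$, so $S\sim_{\Ucal}S$.

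\emph{Symmetry.} The defining condition ``$S(x,y)=T(x,y)$ for all $(x,y)\in A\times A$'' is symmetric in $S$ and $T$. Hence the same witness $A\in\Ucal$ that gives $S\sim_{\Ucal}T$ also gives $T\sim_{\Ucal}S$.

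\emph{Transitivity.} This is the only step with content. Suppose $S\sim_{\Ucal}T$ is witnessed by $A\in\Ucal$, and $T\sim_{\Ucal}R$ is witnessed by $B\in\Ucal$. Set $C=A\cap B$; then $C\in\Ucal$ because $\Ucal$ is closed under finite intersections. Since $C\times C\subseteq (A\times A)\cap(B\times B)$, every $(x,y)\in C\times C$ satisfies
\[
S(x,y)=T(x,y)=R(x,y).
\]
So $C$ witnesses $S\sim_{\Ucal}R$.

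The only point needing care is that the relation is witnessed by square sets $A\times A$, not arbitrary subsets of $[0,1]^2$. The intersection of two such squares is again a square, namely $(A\times A)\cap(B\times B)=(A\cap B)\times(A\cap B)$. This is why closure of $\Ucal$ under intersections suffices, and no product ultrafilter on $[0,1]^2$ is needed. Nothing in the argument depends on the operations being $t$-norms, so the relation is an equivalence relation on any family of binary operations, and in particular on $\Ctn$.
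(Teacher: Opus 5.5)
Your proof is correct and follows the paper's argument: reflexivity via $[0,1]\in\Ucal$, symmetry by inspection, and transitivity by intersecting the two witnesses to get $(A\cap B)\times(A\cap B)=(A\times A)\cap(B\times B)$. Your extra observation that maximality is never used, so the argument works for any filter, is also correct.
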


\begin{proof}
Reflexivity follows by taking $A=[0,1]\in\Ucal$. Symmetry is immediate.

For transitivity, suppose that $S\sim_{\Ucal}T$ and $T\sim_{\Ucal}R$. Then there exist $A,B\in\Ucal$ such that
\[
S=T\quad\text{on }A\times A,
\qquad
T=R\quad\text{on }B\times B.
\]
Since $\Ucal$ is closed under finite intersections, $A\cap B\in\Ucal$. On $(A\cap B)\times(A\cap B)$ we have $S=T=R$. Hence $S\sim_{\Ucal}R$.
\end{proof}

\begin{definition}[Concentration at an endpoint]
Let $\Ucal$ be an ultrafilter on $[0,1]$.
\begin{enumerate}[label=\textnormal{(\roman*)}]
\item We say that $\Ucal$ is concentrated near $1$ if every relative neighbourhood of $1$ belongs to $\Ucal$.
\item We say that $\Ucal$ is non-principal near $1$ if it is concentrated near $1$ and $\{1\}\notin\Ucal$.
\item We say that $\Ucal$ is concentrated near $0$ if every relative neighbourhood of $0$ belongs to $\Ucal$.
\item We say that $\Ucal$ is non-principal near $0$ if it is concentrated near $0$ and $\{0\}\notin\Ucal$.
\end{enumerate}
\end{definition}

\begin{remark}
If $\Ucal$ is non-principal near $1$, then for every $\varepsilon>0$,
\[
(1-\varepsilon,1)\in\Ucal.
\]
Indeed, $(1-\varepsilon,1]\in\Ucal$ by concentration near $1$, while $[0,1)\in\Ucal$ because $\{1\}\notin\Ucal$. Their intersection is $(1-\varepsilon,1)$. Similarly, if $\Ucal$ is non-principal near $0$, then for every $\delta>0$,
\[
(0,\delta)\in\Ucal.
\]
\end{remark}

\begin{definition}[Ultrafilter quantifier]
Let $X$ be a set and let $\Ucal$ be an ultrafilter on $X$. If $\varphi(x)$ is a predicate on $X$, we write
\[
(\Ucal x)\,\varphi(x)
\]
to mean that
\[
\{x\in X:\varphi(x)\}\in\Ucal.
\]
Thus $(\Ucal x)\varphi(x)$ means that $\varphi$ holds for $\Ucal$-almost all $x$.
\end{definition}

\begin{definition}[Induced ultrafilter]
Let $\Ucal$ be an ultrafilter on a set $X$ and let $A\in\Ucal$. The ultrafilter induced by $\Ucal$ on $A$ is
\[
\Ucal|A:=\{B\subseteq A: B=A\cap C\text{ for some }C\in\Ucal\}.
\]
Equivalently, for $B\subseteq A$,
\[
B\in\Ucal|A\quad\Longleftrightarrow\quad B\in\Ucal.
\]
\end{definition}

\begin{remark}
The induced ultrafilter formalizes the localization step used throughout the paper: once a large set $A\in\Ucal$ has been selected, one may work inside $A$ with the induced notion of largeness.
\end{remark}

\begin{definition}[Product ultrafilter]
Let $\Ucal$ be an ultrafilter on a set $X$. The product ultrafilter $\Ucal\otimes\Ucal$ on $X\times X$ is defined by declaring, for $E\subseteq X\times X$,
\[
E\in\Ucal\otimes\Ucal
\]
if and only if
\[
(\Ucal y)(\Ucal x)\bigl((x,y)\in E\bigr).
\]
Equivalently,
\[
E\in\Ucal\otimes\Ucal
\quad\Longleftrightarrow\quad
\{y\in X:\{x\in X:(x,y)\in E\}\in\Ucal\}\in\Ucal.
\]
\end{definition}

\begin{proposition}[Rectangular equivalence implies product-ultrafilter equivalence]
Let $S,T$ be binary operations on $[0,1]$, and put
\[
E_{S,T}:=\{(x,y)\in[0,1]^2:S(x,y)=T(x,y)\}.
\]
If $S\sim_{\Ucal}T$, then
\[
E_{S,T}\in\Ucal\otimes\Ucal.
\]
\end{proposition}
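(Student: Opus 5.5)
The plan is to unwind the definition of $\Ucal\otimes\Ucal$ directly, using $A\times A\subseteq E_{S,T}$ and upward closure of $\Ucal$. Since $S\sim_{\Ucal}T$, we fix $A\in\Ucal$ with $S=T$ on $A\times A$. By the definition of $E_{S,T}$, this says exactly that
\[
A\times A\subseteq E_{S,T}.
\]

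Next we check the iterated quantifier $(\Ucal y)(\Ucal x)\bigl((x,y)\in E_{S,T}\bigr)$. Fix $y\in A$. For every $x\in A$ the pair $(x,y)$ lies in $A\times A\subseteq E_{S,T}$, so
\[
A\subseteq\{x\in[0,1]:(x,y)\in E_{S,T}\}.
\]
Because $A\in\Ucal$ and ultrafilters are upward closed, the section on the right belongs to $\Ucal$. Hence
\[
A\subseteq\bigl\{y\in[0,1]:\{x:(x,y)\in E_{S,T}\}\in\Ucal\bigr\},
\]
and a second application of upward closure shows that this outer set is in $\Ucal$. By the equivalent form of the definition of the product ultrafilter, $E_{S,T}\in\Ucal\otimes\Ucal$.

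There is no real obstacle here. The one point to handle carefully is the order of quantifiers: the outer quantifier ranges over $y$ and the inner one over $x$. For a rectangle $A\times A$ the order does not matter, so the argument goes through. The only facts about $\Ucal\otimes\Ucal$ that we use are its definition and the upward closure of $\Ucal$. In particular, we do not need to know that $\Ucal\otimes\Ucal$ is itself an ultrafilter.
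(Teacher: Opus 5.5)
Your proposal is correct and follows essentially the same route as the paper's proof: you take the witness $A\in\Ucal$ with $A\times A\subseteq E_{S,T}$, note that $A$ is contained in each section $\{x:(x,y)\in E_{S,T}\}$ for $y\in A$, and then apply upward closure twice. Your remark that the order of quantifiers is harmless for a rectangle is a fair aside.
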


\begin{proof}
Suppose $S\sim_{\Ucal}T$. Then there exists $A\in\Ucal$ such that
\[
A\times A\subseteq E_{S,T}.
\]
For each $y\in A$, we have
\[
A\subseteq \{x\in[0,1]:(x,y)\in E_{S,T}\},
\]
and hence
\[
\{x\in[0,1]:(x,y)\in E_{S,T}\}\in\Ucal.
\]
Therefore
\[
A\subseteq \{y\in[0,1]:\{x\in[0,1]:(x,y)\in E_{S,T}\}\in\Ucal\}.
\]
Since $A\in\Ucal$, the last set belongs to $\Ucal$. Hence $E_{S,T}\in\Ucal\otimes\Ucal$.
\end{proof}

\begin{remark}
We keep the rectangular definition of $\sim_{\Ucal}$ because it gives explicit witnesses $A\in\Ucal$ and is stronger than merely requiring equality on a $\Ucal\otimes\Ucal$-large subset of $[0,1]^2$. The product-ultrafilter formulation is useful for comparison with standard ultrafilter-quantifier notation.
\end{remark}

\section{Two concentration regimes}

\subsection{The low-value regime}

\begin{proposition}[Low-value collapse]
\label{prop:low-collapse}
Let $\Ucal$ be an ultrafilter on $[0,1]$. Assume that there exist $\delta\in(0,\frac12]$ and $A\in\Ucal$ such that
\[
A\subseteq[0,\delta].
\]
Then on $A\times A$ one has
\[
T_L=T_{NM}=T_D=0.
\]
\end{proposition}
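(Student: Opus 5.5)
The argument is a direct pointwise verification; the ultrafilter plays no role beyond supplying the set $A$. I would fix an arbitrary pair $(x,y)\in A\times A$. From $A\subseteq[0,\delta]$ and $\delta\le\frac12$ we get $0\le x,y\le\delta\le\frac12$. This gives two elementary inequalities,
\[
x+y\le 2\delta\le 1,\qquad \max\{x,y\}\le\delta\le\tfrac12<1,
\]
and each of the three identities then follows from one of them.

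\emph{\L ukasiewicz.} Since $x+y-1\le 0$, we have $T_L(x,y)=\max\{0,x+y-1\}=0$.

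\emph{Nilpotent minimum.} The inequality $x+y\le 1$ puts us in the second branch of the definition, so $T_{NM}(x,y)=0$. The non-strict inequality $x+y\le1$ is exactly what is needed here: the boundary case $x=y=\frac12$ (possible only when $\delta=\frac12$) is still sent to $0$, because the first branch requires $x+y>1$ strictly.

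\emph{Drastic.} Since $\max\{x,y\}<1$, the second branch applies and $T_D(x,y)=0$.

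Since $(x,y)\in A\times A$ was arbitrary, the three operations vanish identically on $A\times A$. I do not expect any real obstacle. The only point needing care is the boundary case $\delta=\frac12$ for $T_{NM}$ discussed above, which is precisely why the hypothesis reads $\delta\le\frac12$ rather than $A\subseteq[0,1-\delta]$, as the introduction warns.

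I would close with a remark rather than a further step. In the language of the Definition of ultrafilter equivalence, the statement immediately yields $T_L\sim_{\Ucal}T_{NM}\sim_{\Ucal}T_D\sim_{\Ucal}\Zero$, with the single witness $A$. By the Remark following the concentration definitions, this applies in particular whenever $\Ucal$ is concentrated near $0$, taking $A=[0,\delta]$ or $(0,\delta)$.
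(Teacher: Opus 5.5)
Your proposal is correct and is essentially the paper's proof: a pointwise check on $A\times A$, using $x+y\le 2\delta\le 1$ for $T_L$ and $T_{NM}$, and $\max\{x,y\}<1$ for $T_D$. One small caveat affects only your closing remark, not the proof: $(0,\delta)\in\Ucal$ requires $\Ucal$ to be non-principal near $0$, whereas mere concentration near $0$ gives only $[0,\delta]\in\Ucal$.
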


\begin{proof}
Take $x,y\in A$. Since $x,y\le\delta\le\frac12$, we have
\[
x+y\le 2\delta\le 1.
\]
For the {\L}ukasiewicz $t$-norm,
\[
T_L(x,y)=\max\{0,x+y-1\}=0.
\]
For the nilpotent minimum, the condition $x+y>1$ never occurs, hence
\[
T_{NM}(x,y)=0.
\]
For the drastic $t$-norm, since $x,y\le\delta<1$, we have
\[
\max\{x,y\}<1,
\]
and therefore
\[
T_D(x,y)=0.
\]
Thus all three operations agree with the zero operation on $A\times A$.
\end{proof}

\begin{remark}
The hypothesis $A\subseteq[0,\delta]$ with $\delta\le\frac12$ is essential. The weaker condition
\[
A\subseteq[0,1-\delta]
\]
does not imply $T_L=0$ or $T_{NM}=0$ on $A\times A$.
\end{remark}

\begin{proposition}[Classes in a non-principal low-value regime]
\label{prop:low-classes}
Let $\Ucal$ be a non-principal ultrafilter concentrated near $0$. Then
\[
[T_L]=[T_{NM}]=[T_D],
\]
and this class is distinct from both $[T_G]$ and $[T_P]$. Moreover,
\[
[T_G]\ne [T_P].
\]
Thus the five operations determine exactly three $\Ucal$-classes:
\[
[T_L]=[T_{NM}]=[T_D],\qquad [T_G],\qquad [T_P].
\]
\end{proposition}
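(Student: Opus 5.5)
We argue in three steps: the collapse, the separations, and the count. For the collapse, non-principality near $0$ (by the Remark following the definition of concentration) gives $(0,\tfrac12)\in\Ucal$. Applying Proposition~\ref{prop:low-collapse} with $A=(0,\tfrac12)$ and $\delta=\tfrac12$ shows that $T_L$, $T_{NM}$ and $T_D$ all equal $0$ on $A\times A$. Hence $[T_L]=[T_{NM}]=[T_D]$, and all three are equivalent to the zero operation.

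For the separations, suppose $S\sim_{\Ucal}T$ with witness $B\in\Ucal$. Then $B\cap(0,\tfrac12)\in\Ucal$, and this set is nonempty because $\emptyset\notin\Ucal$. So we may pick a point $x\in B$ with $0<x<\tfrac12$, and evaluating on the diagonal at $(x,x)$ yields a contradiction in each case.
\begin{itemize}[label=$\cdot$]
\item Against $T_G$: we get $T_G(x,x)=x>0$, while $T_L(x,x)=0$. So $[T_G]\ne[T_L]$.
\item Against $T_P$: we get $T_P(x,x)=x^2>0$, while $T_L(x,x)=0$. So $[T_P]\ne[T_L]$. By transitivity (the equivalence-relation proposition), the same holds for $T_{NM}$ and $T_D$.
\item Between $T_G$ and $T_P$: we get $T_G(x,x)=x\ne x^2=T_P(x,x)$ since $0<x<1$. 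So $[T_G]\ne[T_P]$.
\end{itemize}

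The count follows. The five operations fall into the class $\{T_L,T_{NM},T_D\}$ together with the classes $[T_G]$ and $[T_P]$, which are distinct from it and from each other. That gives exactly three classes.

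The only delicate point is ensuring that the witness set contains a point strictly between $0$ and $1$, so that $x>0$ and $x\ne x^2$. This is exactly where non-principality ($\{0\}\notin\Ucal$) is needed. Intersecting with $(0,\tfrac12)$ handles it, using only the filter property and $\emptyset\notin\Ucal$.
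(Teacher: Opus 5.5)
Your proof is correct and follows the paper's argument. Like the paper, you get $(0,\delta)\in\Ucal$ with $\delta\le\frac12$, apply Proposition~\ref{prop:low-collapse} for the collapse, and separate the classes by evaluating on the diagonal at a point of $B\cap(0,\delta)$; your explicit remarks that this intersection is nonempty (since $\emptyset\notin\Ucal$) and that distinctness from $[T_L]$ transfers to $T_{NM}$ and $T_D$ by transitivity only spell out steps the paper leaves implicit.
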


\begin{proof}
Since $\Ucal$ is concentrated near $0$, the set $(0,\delta)\in\Ucal$ for every $\delta>0$. Taking $\delta\le\frac12$ and applying Proposition~\ref{prop:low-collapse}, we get
\[
[T_L]=[T_{NM}]=[T_D].
\]

We now prove that this zero class is distinct from $[T_G]$ and $[T_P]$. Suppose, for example, that $T_G\sim_{\Ucal}T_L$. Then there exists $B\in\Ucal$ such that
\[
T_G(x,y)=0\qquad\text{for all }(x,y)\in B\times B.
\]
Intersecting $B$ with $(0,\delta)$, we may assume $B\subseteq(0,\delta)$ and $B\in\Ucal$. Taking $x=y=t\in B$, we get $t=0$, which contradicts $B\subseteq(0,\delta)$. Hence $[T_G]$ is distinct from the zero class.

Similarly, if $T_P\sim_{\Ucal}T_L$, then on some $B\in\Ucal$ with $B\subseteq(0,\delta)$ we would have $t^2=0$ for every $t\in B$, again impossible. Hence $[T_P]$ is distinct from the zero class.

Finally, suppose $T_G\sim_{\Ucal}T_P$. Then on some $B\in\Ucal$, after intersecting with $(0,\delta)$, we have $t=t^2$ for every $t\in B$. Thus $t\in\{0,1\}$ for all $t\in B$, impossible because $B\subseteq(0,\delta)$ and $\delta<1$ while $0\notin B$. Therefore $[T_G]\ne [T_P]$.
\end{proof}

\subsection{The near-1 regime}

\begin{proposition}[Exact formulas near $1$]
\label{prop:near-1-expansion}
Let $\varepsilon\in(0,\frac12)$ and let
\[
A\subseteq[1-\varepsilon,1].
\]
Then, for all $(x,y)\in A\times A$,
\begin{align*}
T_G(x,y)&=\min\{x,y\},\\
T_L(x,y)&=x+y-1,\\
T_P(x,y)&=1-[(1-x)+(1-y)]+(1-x)(1-y),\\
T_{NM}(x,y)&=\min\{x,y\},\\
T_D(x,y)&=
\begin{cases}
\min\{x,y\}, & \max\{x,y\}=1,\\
0, & \max\{x,y\}<1.
\end{cases}
\end{align*}
Moreover,
\[
T_P(x,y)=1-[(1-x)+(1-y)] + o\bigl(\|(1-x,1-y)\|\bigr)
\]
as $(x,y)\to(1,1)$.
\end{proposition}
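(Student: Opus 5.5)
The plan is a direct coordinatewise verification on $A\times A$, using only the bound $x,y\ge 1-\varepsilon>\frac12$ for $x,y\in A$. I will handle the five formulas one at a time, in the order listed, and then treat the asymptotic statement for $T_P$ separately.

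For $T_G$ there is nothing to prove, since $T_G=\min$ by definition. For $T_L$, note that $x+y\ge 2-2\varepsilon>1$ because $\varepsilon<\frac12$. Hence $x+y-1>0$, so the maximum with $0$ is attained by $x+y-1$. The same inequality $x+y>1$ places every pair of $A\times A$ in the first branch of the definition of $T_{NM}$, which gives $T_{NM}=\min\{x,y\}$. For $T_P$, put $a=1-x$ and $b=1-y$. Expanding $(1-a)(1-b)=1-(a+b)+ab$ gives the stated identity; this holds for all $x,y$, not only on $A$. For $T_D$, the displayed formula is just the definition restricted to $A\times A$. The point worth recording is that no simplification occurs, because $A$ may contain both $1$ and points strictly below $1$.

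For the asymptotic claim, the exact identity shows that the remainder is $ab$. Using $|ab|\le \frac12(a^2+b^2)=\frac12\|(a,b)\|^2$, we get $|ab|/\|(a,b)\|\le\frac12\|(a,b)\|\to0$ as $(x,y)\to(1,1)$. Equivalently, one can use $|ab|\le \|(a,b)\|_\infty^2$, and any norm works since all norms on $\mathbb{R}^2$ are equivalent.

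I do not expect a genuine obstacle. The only point needing care is the strict inequality $x+y>1$, which depends on $\varepsilon<\frac12$ being strict. This is exactly what separates the behaviour of $T_{NM}$ here from the boundary case $x+y=1$, where $T_{NM}$ would be $0$.
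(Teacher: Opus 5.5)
Your proposal is correct and matches the paper's proof essentially step for step. The bound $x+y\ge 2(1-\varepsilon)>1$ settles $T_L$ and $T_{NM}$, the substitution $a=1-x$, $b=1-y$ gives the exact identity for $T_P$, and the estimate $|ab|\le\frac12(a^2+b^2)$ gives the $o\bigl(\|(1-x,1-y)\|\bigr)$ remainder; your constant $\frac12$ is slightly sharper than the paper's, which makes no difference.
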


\begin{proof}
The formulas for $T_G$ and $T_D$ are their definitions.

For $T_L$, since $x,y\in[1-\varepsilon,1]$ and $\varepsilon<\frac12$, we have
\[
x+y\ge 2(1-\varepsilon)>1.
\]
Hence the positive branch is active:
\[
T_L(x,y)=\max\{0,x+y-1\}=x+y-1.
\]

For $T_{NM}$, the same inequality $x+y>1$ implies
\[
T_{NM}(x,y)=\min\{x,y\}.
\]

For $T_P$, write $x=1-u$ and $y=1-v$, where $u=1-x$ and $v=1-y$. Then
\[
xy=(1-u)(1-v)=1-u-v+uv,
\]
or equivalently
\[
T_P(x,y)=1-[(1-x)+(1-y)]+(1-x)(1-y).
\]
Finally,
\[
|uv|\le \frac12(u^2+v^2)\le \|(u,v)\|^2.
\]
Consequently,
\[
\frac{|uv|}{\|(u,v)\|}\le \|(u,v)\|\to 0
\qquad ((u,v)\to(0,0)).
\]
Thus $uv=o(\|(u,v)\|)$, which gives the asserted first-order expansion.
\end{proof}

\begin{remark}
Near $1$, the five classical operations exhibit four labelled behaviours:
\[
\Min,\qquad \Lin,\qquad \Prod,\qquad \Dr.
\]
The label $\Min$ is represented by both $T_G$ and $T_{NM}$. The label $\Dr$ denotes the $\Ucal$-class represented by $T_D$ in the near-$1$ quotient; on a non-principal near-$1$ large set, its local formula is the zero operation.
\end{remark}

\section{Separation of the near-1 types}

\begin{proposition}[Pairwise inequivalence near $1$]
\label{prop:pairwise}
Let $\Ucal$ be a non-principal ultrafilter concentrated near $1$. Then the $\Ucal$-equivalence classes
\[
[T_G]=[T_{NM}],\qquad [T_L],\qquad [T_P],\qquad [T_D]
\]
are pairwise distinct.
\end{proposition}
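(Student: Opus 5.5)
The plan is to first fix a convenient large set and then reduce every claimed inequivalence to a contradiction on the diagonal $x=y=t$. By the remark following the definition of concentration, non-principality near $1$ gives $(1-\varepsilon,1)\in\Ucal$ for every $\varepsilon>0$. I will fix $\varepsilon\in(0,\frac12)$ and put $A_0:=(1-\varepsilon,1)$. The equality $[T_G]=[T_{NM}]$ then follows directly from Proposition~\ref{prop:near-1-expansion}: on $A_0\times A_0$ we have $x+y>1$, so $T_{NM}=\min=T_G$. Also, for $x,y\in A_0$ we have $\max\{x,y\}<1$, so $T_D=0$ on $A_0\times A_0$.

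For the six separations, I will argue by contradiction. Suppose two of the four representatives are $\Ucal$-equivalent via some $B\in\Ucal$. Replacing $B$ by $B\cap A_0\in\Ucal$, we may assume $B\subseteq A_0$, and $B\neq\emptyset$ because $\emptyset\notin\Ucal$. I then evaluate at $(t,t)$ for $t\in B$, using the exact formulas of Proposition~\ref{prop:near-1-expansion} and $T_D=0$ on $A_0$. Each case produces a polynomial identity in $t$ that has no solution in $(1-\varepsilon,1)$:
\begin{align*}
T_G\sim T_L&:\ t=2t-1\Rightarrow t=1, & T_G\sim T_P&:\ t=t^2\Rightarrow t\in\{0,1\},\\
T_G\sim T_D&:\ t=0, & T_L\sim T_P&:\ 2t-1=t^2\Rightarrow (t-1)^2=0,\\
T_L\sim T_D&:\ 2t-1=0\Rightarrow t=\tfrac12, & T_P\sim T_D&:\ t^2=0.
\end{align*}
Each conclusion contradicts $t\in(1-\varepsilon,1)$ with $\varepsilon<\frac12$. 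For example, $t=\frac12$ is excluded because $1-\varepsilon>\frac12$.

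There is no real obstacle here. The only point that needs care is that the intersected set is nonempty and excludes $1$, since excluding $1$ is exactly what makes $T_D$ vanish and removes the common root $t=1$ of the first, second and fourth identities. Non-principality near $1$ is what guarantees both properties. Without it, $\Ucal$ would be the principal ultrafilter at $1$, and all five operations would agree at $(1,1)$.
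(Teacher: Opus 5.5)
Your proof is correct and follows the paper's argument: fix $(1-\varepsilon,1)\in\Ucal$ with $\varepsilon<\frac12$, intersect any witnessing set with it, and derive a contradiction on the diagonal $x=y=t$. The only difference is cosmetic. For $T_D$ the paper appeals to positivity of $T_G,T_L,T_P$ on $(1-\varepsilon,1)^2$ instead of writing out the three diagonal equations, and your remarks that the intersected set is nonempty and excludes $1$ spell out what the paper leaves implicit.
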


\begin{proof}
Since $\Ucal$ is non-principal near $1$, for every $\varepsilon\in(0,\frac12)$ the set $A_\varepsilon=(1-\varepsilon,1)$ belongs to $\Ucal$.

First, Proposition~\ref{prop:near-1-expansion} gives $T_G=T_{NM}$ on $[1-\varepsilon,1]\times[1-\varepsilon,1]$, hence $[T_G]=[T_{NM}]$.

We now show that no further identifications occur.

\smallskip
\noindent\emph{Step 1: $[T_G]\ne[T_L]$.}
Suppose toward a contradiction that $T_G\sim_{\Ucal}T_L$. Then there exists $B\in\Ucal$ such that
\[
\min\{x,y\}=x+y-1
\qquad\text{for all }(x,y)\in B\times B.
\]
Let $C=B\cap A_\varepsilon$. Then $C\in\Ucal$ and $C\subseteq(1-\varepsilon,1)$. Taking $x=y=t\in C$, we get $t=2t-1$, so $t=1$. This contradicts $C\subseteq(1-\varepsilon,1)$. Hence $[T_G]\ne[T_L]$.

\smallskip
\noindent\emph{Step 2: $[T_G]\ne[T_P]$.}
Suppose $T_G\sim_{\Ucal}T_P$. Then for some $B\in\Ucal$,
\[
\min\{x,y\}=xy
\qquad\text{for all }(x,y)\in B\times B.
\]
Intersecting again with $A_\varepsilon$, take $t\in C=B\cap A_\varepsilon$. Then $t=t^2$, so $t\in\{0,1\}$, contradicting $C\subseteq(1-\varepsilon,1)$. Thus $[T_G]\ne[T_P]$.

\smallskip
\noindent\emph{Step 3: $[T_L]\ne[T_P]$.}
Suppose $T_L\sim_{\Ucal}T_P$. Then for some $B\in\Ucal$,
\[
x+y-1=xy
\qquad\text{for all }(x,y)\in B\times B.
\]
Taking $x=y=t$ in $C=B\cap A_\varepsilon$, we obtain $2t-1=t^2$, or $(1-t)^2=0$. Thus $t=1$, again contradicting $C\subseteq(1-\varepsilon,1)$. Hence $[T_L]\ne[T_P]$.

\smallskip
\noindent\emph{Step 4: the class represented by $T_D$ is distinct.}
On $A_\varepsilon\times A_\varepsilon$ we have $T_D(x,y)=0$ because $\max\{x,y\}<1$. By contrast,
\[
T_G(x,y)>0,\qquad T_L(x,y)>0,\qquad T_P(x,y)>0
\]
on $A_\varepsilon\times A_\varepsilon$. Therefore $T_D$ cannot be $\Ucal$-equivalent to any of $T_G,T_L,T_P$.
\end{proof}

\begin{corollary}[Four classes in the near-$1$ regime]
\label{cor:fourclasses}
If $\Ucal$ is a non-principal ultrafilter concentrated near $1$, then the five classical operations determine exactly four $\Ucal$-equivalence classes:
\[
[T_G]=[T_{NM}],\qquad [T_L],\qquad [T_P],\qquad [T_D].
\]
\end{corollary}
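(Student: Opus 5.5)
The corollary follows almost directly from Proposition~\ref{prop:pairwise}. I would argue it by counting: first show there are at most four classes, then that there are at least four.

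For the upper bound, I use the remark that non-principality near $1$ gives $A_\varepsilon=(1-\varepsilon,1)\in\Ucal$ for every $\varepsilon\in(0,\frac12)$. Since $A_\varepsilon\subseteq[1-\varepsilon,1]$, Proposition~\ref{prop:near-1-expansion} yields $T_G(x,y)=\min\{x,y\}=T_{NM}(x,y)$ for all $(x,y)\in A_\varepsilon\times A_\varepsilon$. So $A_\varepsilon$ witnesses $T_G\sim_{\Ucal}T_{NM}$. The five operations therefore fall into the four (possibly coinciding) classes $[T_G]=[T_{NM}]$, $[T_L]$, $[T_P]$ and $[T_D]$.

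For the lower bound, I invoke Proposition~\ref{prop:pairwise}, which states that these four classes are pairwise distinct. For completeness I would recall the mechanism. Suppose two representatives agreed on some $B\times B$ with $B\in\Ucal$. Then $C=B\cap A_\varepsilon$ still lies in $\Ucal$ and is nonempty. Restricting to the diagonal $x=y=t\in C$ gives one of three identities:
\[
t=2t-1,\qquad t=t^2,\qquad 2t-1=t^2 .
\]
Each of these forces $t=1$ or $t=0$, which is impossible for $t\in(1-\varepsilon,1)$. For $T_D$, the point is that $T_D$ vanishes on $A_\varepsilon\times A_\varepsilon$, while $T_G$, $T_L$ and $T_P$ are strictly positive there.

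Combining the two bounds, the partition of $\Ctn$ induced by $\sim_{\Ucal}$ has exactly four blocks, namely $\{T_G,T_{NM}\}$, $\{T_L\}$, $\{T_P\}$ and $\{T_D\}$. None of this is a real obstacle. The only care needed is to check that every intersection used lies in $\Ucal$, so it is nonempty and the diagonal test applies; this holds because $\Ucal$ is a proper filter closed under finite intersections.
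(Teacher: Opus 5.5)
Your proposal is correct and follows essentially the paper's route. The paper's proof just cites Proposition~\ref{prop:pairwise}, whose proof does what you describe: it identifies $T_G$ with $T_{NM}$ on $A_\varepsilon\times A_\varepsilon$ via Proposition~\ref{prop:near-1-expansion}, then separates the remaining classes using the diagonal identities $t=2t-1$, $t=t^2$, $2t-1=t^2$ and the vanishing of $T_D$ on $A_\varepsilon\times A_\varepsilon$, with your ``at most four / at least four'' framing only making the count explicit.
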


\begin{proof}
This is exactly Proposition~\ref{prop:pairwise}.
\end{proof}

\section{Ultrametric models}

Although the quotient classification above does not require metric machinery, it is useful to record simple ultrametric models for the two regimes.

\begin{proposition}[Ultrametric for the near-$1$ regime]
\label{prop:ultra-near1}
Let $A\subseteq[1-\varepsilon,1]$ with $\varepsilon\in(0,\frac12)$. Define
\[
d_1(x,y)=
\begin{cases}
0, & x=y,\\
\max\{1-x,1-y\}, & x\ne y.
\end{cases}
\]
Then $d_1$ is an ultrametric on $A$.
\end{proposition}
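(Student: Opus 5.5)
We check the three ultrametric axioms for $d_1$ on $A$ directly from the definition; the hypothesis $A\subseteq[1-\varepsilon,1]$ with $\varepsilon<\frac12$ plays no essential role and only ensures $1-x,1-y\in[0,\varepsilon]$, so $d_1$ is bounded by $\varepsilon$.

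First, $d_1(x,y)\ge0$ because $1-x,1-y\ge0$. Symmetry holds because $\max\{1-x,1-y\}$ is symmetric in $x,y$. For the identity of indiscernibles, $d_1(x,x)=0$ by definition. Conversely, if $x\neq y$, at least one of them is strictly less than $1$, since they cannot both equal $1$. Hence $\max\{1-x,1-y\}>0$, and therefore $d_1(x,y)=0$ forces $x=y$.

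The main step is the strong triangle inequality
\[
d_1(x,z)\le\max\{d_1(x,y),d_1(y,z)\}
\]
for all $x,y,z\in A$. If $x=z$, the left side is $0$ and there is nothing to prove. So assume $x\ne z$, so that $d_1(x,z)=\max\{1-x,1-z\}$. We then split into cases according to $y$.

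If $y\ne x$ and $y\ne z$, then the right side equals
\[
\max\{1-x,1-y,1-z\}\ge\max\{1-x,1-z\}.
\]

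If $y=x$, then $d_1(x,y)=0$ and $d_1(y,z)=d_1(x,z)$, so the inequality holds with equality. The case $y=z$ is symmetric.

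The only point needing care is this case distinction, because of the special value $0$ on the diagonal. Once the cases are separated, each reduces to the monotonicity of $\max$.
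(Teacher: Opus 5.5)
Your proof is correct and follows the same route as the paper: dispose of $x=z$, then compare $d_1(x,z)=\max\{1-x,1-z\}$ with $\max\{1-x,1-y,1-z\}$. Your separate treatment of $y=x$ and $y=z$ is slightly more careful than the paper's proof. The paper's identity $\max\{d_1(x,y),d_1(y,z)\}=\max\{1-x,1-y,1-z\}$ tacitly assumes $y\notin\{x,z\}$; when $y=x$ the left side is really $d_1(x,z)$, which still gives the inequality.
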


\begin{proof}
Symmetry and definiteness are immediate. Let $x,y,z\in A$. If $x=z$, then $d_1(x,z)=0\le \max\{d_1(x,y),d_1(y,z)\}$. If $x\ne z$, then
\[
d_1(x,z)=\max\{1-x,1-z\}.
\]
On the other hand,
\begin{align*}
\max\{d_1(x,y),d_1(y,z)\}
&=\max\bigl\{\max\{1-x,1-y\},\max\{1-y,1-z\}\bigr\}\\
&=\max\{1-x,1-y,1-z\}.
\end{align*}
Thus $d_1(x,z)\le \max\{d_1(x,y),d_1(y,z)\}$. Hence $d_1$ is an ultrametric.
\end{proof}

\begin{proposition}[Ultrametric for the low-value regime]
\label{prop:ultra-low}
Let $A\subseteq[0,\delta]$ with $\delta\in(0,\frac12]$. Define
\[
d_0(x,y)=
\begin{cases}
0, & x=y,\\
\max\{x,y\}, & x\ne y.
\end{cases}
\]
Then $d_0$ is an ultrametric on $A$.
\end{proposition}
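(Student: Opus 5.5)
The plan mirrors the proof of Proposition~\ref{prop:ultra-near1}, with $1-x$ replaced by $x$. First we check the metric axioms other than the ultrametric inequality. By definition $d_0(x,x)=0$, and $d_0$ is symmetric because $\max\{x,y\}=\max\{y,x\}$.

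For definiteness we need $d_0(x,y)>0$ whenever $x\ne y$. If $x\ne y$ with $x,y\ge 0$, at least one of them is strictly positive, so $\max\{x,y\}>0$. This is the only place where the geometry of $A\subseteq[0,\delta]$ enters. It is a mild point, but it is the step to watch: the value $0$ may belong to $A$, and we must make sure this causes no degeneracy. It does not, because a pair of distinct points can never both equal $0$. The hypothesis $\delta\le\frac12$ is not actually needed for the metric statement; it only records the regime of Proposition~\ref{prop:low-collapse}.

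For the strong triangle inequality, we fix $x,y,z\in A$ and split into two cases.
\begin{itemize}
\item If $x=z$, then $d_0(x,z)=0\le\max\{d_0(x,y),d_0(y,z)\}$.
\item If $x\ne z$, then $y$ differs from at least one of $x$ and $z$.
  \begin{itemize}
  \item If $y\ne x$ and $y\ne z$, the right-hand side equals $\max\{x,y,z\}\ge\max\{x,z\}=d_0(x,z)$.
  \item If $y=x$ (so $y\ne z$), the right-hand side is $\max\{0,\max\{x,z\}\}=d_0(x,z)$.
  \item The case $y=z$ is symmetric to $y=x$.
  \end{itemize}
\end{itemize}
Note that the computation in Proposition~\ref{prop:ultra-near1} silently assumes $y\ne x$ and $y\ne z$. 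I would therefore treat the sub-cases $y=x$ and $y=z$ explicitly, as above, which closes that gap. This completes the verification that $d_0$ is an ultrametric on $A$.
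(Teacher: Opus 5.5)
Your proof is correct and follows the paper's argument: you split on $x=z$ versus $x\neq z$ and bound $\max\{x,z\}$ by $\max\{x,y,z\}$. Your separate treatment of $y=x$ and $y=z$ tightens the justification, since the paper's termwise substitution $d_0(x,y)=\max\{x,y\}$ fails when $y=x$; the overall maximum $\max\{x,y,z\}$ nevertheless comes out the same there.
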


\begin{proof}
Symmetry and definiteness are immediate. Let $x,y,z\in A$. If $x=z$, the ultrametric inequality is immediate. Otherwise,
\[
d_0(x,z)=\max\{x,z\}.
\]
Moreover,
\begin{align*}
\max\{d_0(x,y),d_0(y,z)\}
&=\max\bigl\{\max\{x,y\},\max\{y,z\}\bigr\}\\
&=\max\{x,y,z\}.
\end{align*}
Therefore $d_0(x,z)\le \max\{d_0(x,y),d_0(y,z)\}$, and $d_0$ is an ultrametric.
\end{proof}

\begin{remark}
The ultrametrics $d_1$ and $d_0$ are used here only as regime-specific geometric models. We do not claim that they recover the Euclidean topology on the whole interval, nor do we claim that they yield a complete metric classification of all $t$-norms.
\end{remark}

\section{Pushforward ultrafilters and scale invariance}

The category-theoretic treatment of ultrafilters naturally involves pushforward ultrafilters. We record here a simple invariance property of the equivalence relation $\sim_{\Ucal}$.

\begin{definition}[Pushforward ultrafilter]
Let $X,Y$ be sets, let $f:X\to Y$ be a map, and let $\Ucal$ be an ultrafilter on $X$. The pushforward ultrafilter $f_*\Ucal$ on $Y$ is defined by
\[
B\in f_*\Ucal
\quad\Longleftrightarrow\quad
f^{-1}(B)\in\Ucal
\]
for every $B\subseteq Y$.
\end{definition}

\begin{definition}[Change of scale]
Let $h:[0,1]\to[0,1]$ be a strictly increasing homeomorphism with $h(0)=0$ and $h(1)=1$. For a $t$-norm $T$ on $[0,1]$, define its $h$-conjugate by
\[
T^h(a,b):=h\bigl(T(h^{-1}(a),h^{-1}(b))\bigr).
\]
\end{definition}

\begin{proposition}[Scale invariance of ultrafilter equivalence]
\label{prop:scale-invariance}
Let $h:[0,1]\to[0,1]$ be a strictly increasing homeomorphism fixing $0$ and $1$. If $S,T:[0,1]^2\to[0,1]$ satisfy $S\sim_{\Ucal}T$, then
\[
S^h\sim_{h_*\Ucal}T^h.
\]
\end{proposition}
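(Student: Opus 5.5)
The plan is to transport the witness set along $h$. By hypothesis there is $A\in\Ucal$ with $S(x,y)=T(x,y)$ for all $(x,y)\in A\times A$. The candidate witness for $S^h\sim_{h_*\Ucal}T^h$ is $B:=h(A)\subseteq[0,1]$. Two things must then be checked: that $B$ lies in $h_*\Ucal$, and that $S^h=T^h$ on $B\times B$.

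For membership, we use the definition of the pushforward: $B\in h_*\Ucal$ if and only if $h^{-1}(B)\in\Ucal$. Since $h$ is a bijection of $[0,1]$ (it is a homeomorphism), we have $h^{-1}(h(A))=A$, and $A\in\Ucal$ by choice. Only injectivity of $h$ is needed here. Surjectivity is what makes $h^{-1}$ a well-defined map on all of $[0,1]$, so that $S^h$ and $T^h$ are defined everywhere.

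For the equality, take $(a,b)\in B\times B$ and write $a=h(x)$, $b=h(y)$ with $x,y\in A$, so that $x=h^{-1}(a)$ and $y=h^{-1}(b)$. Then
\[
S^h(a,b)=h\bigl(S(x,y)\bigr)=h\bigl(T(x,y)\bigr)=T^h(a,b),
\]
where the middle equality is the hypothesis on $A\times A$. This gives $S^h\sim_{h_*\Ucal}T^h$ with witness $h(A)$.

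No step is a real obstacle. The only point needing care is that the definition of $T^h$ is stated for $t$-norms, while the proposition is phrased for arbitrary binary operations $S,T$. The same formula defines $S^h$ and $T^h$ for any binary operations, and the argument uses neither monotonicity, associativity, nor the fixed points $h(0)=0$ and $h(1)=1$. I would add a remark that these are relevant only for ensuring that $T^h$ is again a $t$-norm, and that the same regime is preserved: $h_*\Ucal$ is concentrated near $1$ (resp.\ $0$) whenever $\Ucal$ is, by continuity of $h$ at the endpoints.
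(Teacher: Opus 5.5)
Your proof is correct and follows the paper's argument: you take the witness $B=h(A)$, check $h^{-1}(B)=A\in\Ucal$, and verify $S^h=T^h$ on $B\times B$ by writing $a=h(x)$, $b=h(y)$ with $x,y\in A$. Your side remarks are accurate extras not needed for the statement: only bijectivity of $h$ is used, and $h_*\Ucal$ inherits endpoint concentration from the continuity of $h$ together with $h(0)=0$, $h(1)=1$.
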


\begin{proof}
Since $S\sim_{\Ucal}T$, there exists $A\in\Ucal$ such that $S(x,y)=T(x,y)$ for all $(x,y)\in A\times A$. Let $B=h(A)$. Then $B\in h_*\Ucal$, because $h^{-1}(B)=A\in\Ucal$.

Now take $a,b\in B$. Then $a=h(x)$ and $b=h(y)$ for some $x,y\in A$. Hence
\[
S^h(a,b)=h(S(x,y))=h(T(x,y))=T^h(a,b).
\]
Thus $S^h=T^h$ on $B\times B$ for some $B\in h_*\Ucal$, and therefore $S^h\sim_{h_*\Ucal}T^h$.
\end{proof}

\begin{remark}
Proposition~\ref{prop:scale-invariance} shows that the quotient construction does not depend on a particular choice of numerical scale on $[0,1]$. An order-homeomorphic change of scale transports the ultrafilter and preserves ultrafilter equivalence.
\end{remark}

\section{Ultrapower interpretation and saturation}

We now reinterpret the preceding asymptotic calculations inside classical ultrapowers. This section is not needed for the elementary quotient classification, but it explains why the two concentration regimes are naturally viewed as infinitesimal monads. It also gives a saturation principle for countable systems of asymptotic identities.

Throughout this section, let $\Vcal$ be a non-principal ultrafilter on $\NN$. Let
\[
{}^{\ast}\mathbb R=\mathbb R^{\mathbb N}/\Vcal
\]
be the corresponding ultrapower of the ordered field of real numbers, and let ${}^{\ast}[0,1]$ be the induced ultrapower of the interval $[0,1]$.

For a function $T:[0,1]^2\to[0,1]$, we write
\[
{}^{\ast}T:{}^{\ast}[0,1]^2\to{}^{\ast}[0,1]
\]
for its coordinatewise ultrapower extension:
\[
{}^{\ast}T\bigl([x_n]_{\Vcal},[y_n]_{\Vcal}\bigr)
=
[T(x_n,y_n)]_{\Vcal}.
\]

\begin{definition}[Infinitesimal endpoint monads]
Define the left monad of $1$ in ${}^{\ast}[0,1]$ by
\[
\mu^-(1)=
\left\{
\xi\in{}^{\ast}[0,1]:
0<1-\xi<\frac1n\text{ for every }n\ge2
\right\}.
\]
Define the right monad of $0$ in ${}^{\ast}[0,1]$ by
\[
\mu^+(0)=
\left\{
\xi\in{}^{\ast}[0,1]:
0<\xi<\frac1n\text{ for every }n\ge2
\right\}.
\]
\end{definition}

\begin{remark}
Elements of $\mu^-(1)$ are positive infinitesimal perturbations from the left of $1$, while elements of $\mu^+(0)$ are positive infinitesimals above $0$. For example, in any non-principal ultrapower over $\NN$, the classes of the sequences $1-1/(n+2)$ and $1/(n+2)$ belong to $\mu^-(1)$ and $\mu^+(0)$, respectively.
\end{remark}

\begin{proposition}[Nonstandard near-$1$ classification]
\label{prop:nonstandard-near1}
Let $\xi,\eta\in\mu^-(1)$. Then
\begin{align*}
{}^{\ast}T_G(\xi,\eta)&={}^{\ast}T_{NM}(\xi,\eta)=\min\{\xi,\eta\},\\
{}^{\ast}T_L(\xi,\eta)&=\xi+\eta-1,\\
{}^{\ast}T_P(\xi,\eta)&=\xi\eta,\\
{}^{\ast}T_D(\xi,\eta)&=0.
\end{align*}
\end{proposition}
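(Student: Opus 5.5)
The plan is to reduce everything to a statement about $\Vcal$-almost all coordinates, use Proposition~\ref{prop:near-1-expansion} coordinatewise, and then apply Łoś's theorem, or more simply the definition of equality in the ultrapower.

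Write $\xi=[x_n]_\Vcal$ and $\eta=[y_n]_\Vcal$. The first step extracts the relevant largeness from membership in $\mu^-(1)$. The inequality $1-\xi<\frac1n$, for a fixed standard $n$, means by definition of the order on ${}^{\ast}\mathbb R$ that $\{k: 1-x_k<\frac1n\}\in\Vcal$. Similarly, $0<1-\xi$ gives $\{k:x_k<1\}\in\Vcal$, and the same holds for $\eta$.

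Fix one standard $\varepsilon=\frac13$, which lies in $(0,\frac12)$, and take $n=3$. The set
\[
I=\{k: x_k,y_k\in(1-\varepsilon,1)\}
\]
then belongs to $\Vcal$, being a finite intersection of $\Vcal$-large sets. Only this single standard level is needed, not the full infinitesimal condition.

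On each $k\in I$, Proposition~\ref{prop:near-1-expansion}, applied to $A=\{x_k,y_k\}\subseteq[1-\varepsilon,1]$, gives the following identities:
\begin{itemize}
\item $T_G(x_k,y_k)=T_{NM}(x_k,y_k)=\min\{x_k,y_k\}$;
\item $T_L(x_k,y_k)=x_k+y_k-1$;
\item $T_P(x_k,y_k)=x_ky_k$;
\item since $\max\{x_k,y_k\}<1$, the definition of $T_D$ gives $T_D(x_k,y_k)=0$.
\end{itemize}

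Since these identities hold on a $\Vcal$-large index set, the corresponding classes coincide in the ultrapower. It remains to identify the right-hand sides with the internal operations. The ordered field operations and $\min$ on ${}^{\ast}\mathbb R$ are computed coordinatewise, so $[\min\{x_k,y_k\}]_\Vcal=\min\{\xi,\eta\}$, $[x_k+y_k-1]_\Vcal=\xi+\eta-1$, and $[x_ky_k]_\Vcal=\xi\eta$. For $\min$ this uses the fact that the ultrapower order is linear: either $\{x_k\le y_k\}$ or its complement lies in $\Vcal$. Finally, $[0]_\Vcal=0$.

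No real obstacle is expected. The only point requiring care is the first step: membership in $\mu^-(1)$ must be translated into a $\Vcal$-large set of indices on which the coordinates lie strictly inside $(1-\varepsilon,1)$. Strictness below $1$ is essential for the $T_D$ clause, since without it some coordinates could equal $1$ and $T_D$ would then be nonzero there. This strictness comes from the hypothesis $0<1-\xi$.
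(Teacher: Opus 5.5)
Your proof is correct and is essentially the paper's argument. Both use only the standard bounds $\frac23<\xi,\eta<1$, with strictness below $1$ needed for the $T_D$ clause, and then read off the active branch of each operation. The difference is presentational: you compute on representatives over a $\Vcal$-large index set and lift back, whereas the paper applies the case definitions directly to the hyperreals and leaves that coordinatewise ({\L}o\'s) step implicit.
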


\begin{proof}
Since $\xi,\eta\in\mu^-(1)$, we have
\[
\xi>1-\frac13=\frac23,
\qquad
\eta>1-\frac13=\frac23.
\]
Hence $\xi+\eta>1$. Therefore the positive branch of the {\L}ukasiewicz operation is active, and
\[
{}^{\ast}T_L(\xi,\eta)=\xi+\eta-1.
\]
For the nilpotent minimum, the same inequality $\xi+\eta>1$ gives
\[
{}^{\ast}T_{NM}(\xi,\eta)=\min\{\xi,\eta\}.
\]
The formula for $T_G$ is immediate from its definition, and the formula for $T_P$ is coordinatewise multiplication. Finally, since $\xi<1$ and $\eta<1$, we have $\max\{\xi,\eta\}<1$, so the drastic operation gives ${}^{\ast}T_D(\xi,\eta)=0$.
\end{proof}

\begin{proposition}[Nonstandard low-value collapse]
\label{prop:nonstandard-low}
Let $\xi,\eta\in\mu^+(0)$. Then
\[
{}^{\ast}T_L(\xi,\eta)
=
{}^{\ast}T_{NM}(\xi,\eta)
=
{}^{\ast}T_D(\xi,\eta)
=0.
\]
\end{proposition}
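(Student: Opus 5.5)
The plan mirrors the proof of Proposition~\ref{prop:nonstandard-near1}, now using the standard bound $\frac13$ at $0$ instead of at $1$. Fix $\xi,\eta\in\mu^+(0)$. Taking $n=3$ in the definition of $\mu^+(0)$ gives
\[
0<\xi<\tfrac13,\qquad 0<\eta<\tfrac13 ,
\]
so that
\[
\xi+\eta<\tfrac23<1,\qquad \max\{\xi,\eta\}<1 .
\]
These are inequalities in the ordered field ${}^{\ast}\mathbb R$.

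Next I will evaluate the coordinatewise extensions. Choose representatives $\xi=[x_n]_{\Vcal}$ and $\eta=[y_n]_{\Vcal}$. Since $\xi<\frac13$ and $\eta<\frac13$ hold in the ultrapower, the set
\[
I=\{n\in\NN: x_n<\tfrac13,\ y_n<\tfrac13\}
\]
belongs to $\Vcal$; this is just the definition of the order on ${}^{\ast}\mathbb R$ together with closure of $\Vcal$ under finite intersections. For $n\in I$ we have $x_n,y_n\in[0,\frac13]$, so Proposition~\ref{prop:low-collapse} applies with $A=\{x_n,y_n\}\subseteq[0,\frac13]$ and $\delta=\frac13\le\frac12$. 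Equivalently, one can read it off the definitions directly:
\[
T_L(x_n,y_n)=\max\{0,x_n+y_n-1\}=0,
\]
$T_{NM}(x_n,y_n)=0$ because $x_n+y_n\le 1$, and $T_D(x_n,y_n)=0$ because $\max\{x_n,y_n\}<1$.

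Hence each of the sequences $T_L(x_n,y_n)$, $T_{NM}(x_n,y_n)$ and $T_D(x_n,y_n)$ is $\Vcal$-almost everywhere equal to $0$. Their classes in the ultrapower are therefore $0$, which gives
\[
{}^{\ast}T_L(\xi,\eta)={}^{\ast}T_{NM}(\xi,\eta)={}^{\ast}T_D(\xi,\eta)=0 .
\]

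The only point needing care is the passage from an inequality between ultrapower elements to the same inequality holding for $\Vcal$-almost all coordinates. This is immediate from the definition of the ordering as a Łoś-type statement, and it also shows that the result does not depend on the choice of representatives. The strict positivity $\xi,\eta>0$ is not needed; only the upper bound $\frac13<\frac12$ matters.
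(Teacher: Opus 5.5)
Your proof is correct and is essentially the paper's argument: the same choice $n=3$ gives $\xi,\eta<\frac13$, hence $\xi+\eta<1$ and $\max\{\xi,\eta\}<1$, followed by the same three case evaluations. The only difference is that you explicitly pass to representatives on the $\Vcal$-large set $\{n:x_n<\frac13,\ y_n<\frac13\}$, whereas the paper applies the defining formulas directly to the hyperreals and leaves that {\L}o\'s-type step implicit.
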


\begin{proof}
Since $\xi,\eta\in\mu^+(0)$, we have $0<\xi<1/3$ and $0<\eta<1/3$. Thus $\xi+\eta<2/3<1$. Consequently,
\[
{}^{\ast}T_L(\xi,\eta)=\max\{0,\xi+\eta-1\}=0,
\]
and the condition $\xi+\eta>1$ in the nilpotent minimum never occurs, so ${}^{\ast}T_{NM}(\xi,\eta)=0$. Moreover, since $\xi,\eta<1$, we have $\max\{\xi,\eta\}<1$, and hence ${}^{\ast}T_D(\xi,\eta)=0$.
\end{proof}

\begin{proposition}[Saturated separation near $1$]
\label{prop:saturated-separation}
Let $\xi\in\mu^-(1)$. Then
\[
{}^{\ast}T_G(\xi,\xi)=\xi,
\qquad
{}^{\ast}T_L(\xi,\xi)=2\xi-1,
\qquad
{}^{\ast}T_P(\xi,\xi)=\xi^2,
\qquad
{}^{\ast}T_D(\xi,\xi)=0,
\]
and these four hyperreal numbers are pairwise distinct.
\end{proposition}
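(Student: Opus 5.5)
The formulas come directly from Proposition~\ref{prop:nonstandard-near1} by setting $\eta=\xi$. This gives ${}^{\ast}T_G(\xi,\xi)=\min\{\xi,\xi\}=\xi$, ${}^{\ast}T_L(\xi,\xi)=2\xi-1$, ${}^{\ast}T_P(\xi,\xi)=\xi\cdot\xi=\xi^2$ and ${}^{\ast}T_D(\xi,\xi)=0$. The only work is the pairwise distinctness of these four values. I would argue entirely inside the ordered field ${}^{\ast}\mathbb R$ and use only the defining property $0<1-\xi<\frac1n$ of $\mu^-(1)$; in fact only $0<1-\xi<\frac12$ is needed. Throughout I write $u=1-\xi$, so that $0<u<\frac12$ holds in ${}^{\ast}\mathbb R$.

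Since ${}^{\ast}\mathbb R$ is an ordered field (Łoś's theorem, or directly coordinatewise on a $\Vcal$-large set), the usual algebra is valid. I would rewrite the four values as
\[
\xi=1-u,\qquad 2\xi-1=1-2u,\qquad \xi^2=1-2u+u^2,\qquad 0.
\]
The first three are then compared by strict inequalities:
\[
1-2u<1-2u+u^2<1-u .
\]
The left inequality holds because $u^2>0$ when $u\neq0$. The right one is equivalent to $u^2<u$, which holds because $0<u<1$. This mirrors Steps 1--3 of Proposition~\ref{prop:pairwise}, where the identities $t=2t-1$, $t=t^2$ and $(1-t)^2=0$ all forced $t=1$; here the corresponding conclusion $u=0$ is excluded by $u>0$.

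To separate $0$ from the other three, it suffices to show the smallest of them is positive: $2\xi-1=1-2u>0$ because $u<\frac12$, which is guaranteed since $1-\xi<\frac13$ from the monad condition with $n=3$. This gives the chain $0<2\xi-1<\xi^2<\xi$, and pairwise distinctness follows.

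The only point needing care is justifying the field arithmetic and the strict order comparisons in ${}^{\ast}\mathbb R$. I would handle this either by invoking Łoś's theorem, so that ${}^{\ast}\mathbb R$ is an ordered field, or concretely: choose representatives $x_n$ of $\xi$ and work on the $\Vcal$-large set where $0<1-x_n<\frac13$. On that set the real inequalities $0<2x_n-1<x_n^2<x_n$ hold for each $n$, and they pass to the ultrapower.
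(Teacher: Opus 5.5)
Your proof is correct and follows essentially the same route as the paper: with $u=1-\xi$ you check the same three differences $u$, $u-u^2=\xi u$ and $u^2$, and use $2\xi-1>0$ (from $u<\tfrac12$) to separate all three values from $0$. Writing this as the single chain $0<2\xi-1<\xi^2<\xi$ is only a tidier way of recording the distinctions the paper verifies one by one.
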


\begin{proof}
Put $\epsilon=1-\xi$. Since $\xi\in\mu^-(1)$, we have $0<\epsilon<1/n$ for every $n\ge2$. In particular, $0<\epsilon<1$ and $\xi=1-\epsilon>0$.

By Proposition~\ref{prop:nonstandard-near1}, the four displayed formulas hold. We now compare the values. First,
\[
\xi-(2\xi-1)=1-\xi=\epsilon>0.
\]
Second,
\[
\xi-\xi^2=\xi(1-\xi)=\xi\epsilon>0.
\]
Third,
\[
\xi^2-(2\xi-1)=(1-\xi)^2=\epsilon^2>0.
\]
Since $\xi>0$, $\xi^2>0$, and $2\xi-1>0$, none of these three values is equal to $0$. Hence all four displayed values are pairwise distinct.
\end{proof}

\begin{theorem}[Local equality versus monad equality]
\label{thm:local-vs-monad}
Let $S,T:[0,1]^2\to[0,1]$ be arbitrary binary operations.
\begin{enumerate}[label=\textnormal{(\arabic*)}]
\item There exists $\varepsilon>0$ such that $S=T$ on $(1-\varepsilon,1)^2$ if and only if ${}^{\ast}S={}^{\ast}T$ on $\mu^-(1)\times\mu^-(1)$.
\item There exists $\delta>0$ such that $S=T$ on $(0,\delta)^2$ if and only if ${}^{\ast}S={}^{\ast}T$ on $\mu^+(0)\times\mu^+(0)$.
\end{enumerate}
\end{theorem}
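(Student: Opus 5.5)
The plan is to prove both directions of (1) directly from the definition of the ultrapower. Part (2) then follows by the same argument with $(1-\varepsilon,1)$ replaced by $(0,\delta)$ and $1-x$ replaced by $x$. Throughout I use one fact about a non-principal $\Vcal$ on $\NN$: every cofinite subset of $\NN$ belongs to $\Vcal$. I also use the equality criterion in ${}^{\ast}[0,1]$: $[a_n]_{\Vcal}=[b_n]_{\Vcal}$ if and only if $\{n:a_n=b_n\}\in\Vcal$.

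\emph{Forward direction of (1).} Suppose $S=T$ on $(1-\varepsilon,1)^2$, and let $\xi=[x_n]_{\Vcal}$ and $\eta=[y_n]_{\Vcal}$ lie in $\mu^-(1)$. Choose $N\ge2$ with $1/N<\varepsilon$. By the definition of $\mu^-(1)$ we have $0<1-\xi<1/N$. Unwinding the ultrapower order, this means
\[
I_\xi:=\{n: 1-\varepsilon<x_n<1\}\in\Vcal .
\]
Define $I_\eta$ in the same way; then $I_\eta\in\Vcal$ as well. For $n\in I_\xi\cap I_\eta\in\Vcal$ the hypothesis gives $S(x_n,y_n)=T(x_n,y_n)$. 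Hence ${}^{\ast}S(\xi,\eta)=[S(x_n,y_n)]_{\Vcal}=[T(x_n,y_n)]_{\Vcal}={}^{\ast}T(\xi,\eta)$.

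\emph{Converse of (1), by contraposition.} Suppose that for no $\varepsilon>0$ do we have $S=T$ on $(1-\varepsilon,1)^2$. Then for each $n\ge2$, taking $\varepsilon=1/n$, I pick $x_n,y_n\in(1-\tfrac1n,1)$ with $S(x_n,y_n)\ne T(x_n,y_n)$; for $n=0,1$ I take arbitrary values. Put $\xi=[x_n]_{\Vcal}$ and $\eta=[y_n]_{\Vcal}$. To check that $\xi\in\mu^-(1)$, note first that $x_n<1$ for all $n\ge2$, a cofinite set, so $1-\xi>0$. Next, for a fixed $m\ge2$ we have $1-x_n<1/n\le1/m$ for all $n\ge m$, again a cofinite set, so $1-\xi<1/m$. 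The same holds for $\eta$. Finally, $\{n:S(x_n,y_n)=T(x_n,y_n)\}\subseteq\{0,1\}$, and this set is not in $\Vcal$ by non-principality. Therefore ${}^{\ast}S(\xi,\eta)\ne{}^{\ast}T(\xi,\eta)$, which contradicts equality on the monad.

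The step needing the most care is this converse. Saturation is not required: a single diagonal choice of witnesses, one for each scale $1/n$, produces an element of the monad directly. What must be checked is that the diagonal sequence really defines a point of $\mu^-(1)$, including the strict inequality $1-\xi>0$, and that nowhere-agreement of the coordinates gives inequality in the ultrapower. Both checks rest only on the cofinite sets lying in $\Vcal$.
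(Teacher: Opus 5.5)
Your proof is correct and follows the paper's argument essentially step for step: the forward direction intersects the two $\Vcal$-large index sets on which the coordinates lie in $(1-\varepsilon,1)$, and the converse builds diagonal witnesses $x_n,y_n\in(1-1/n,1)$ with $S(x_n,y_n)\ne T(x_n,y_n)$, using only that cofinite sets belong to the non-principal $\Vcal$. Your explicit treatment of the indices $n=0,1$ and of the strict inequality $1-\xi>0$ is slightly more careful than the paper's write-up, but the approach is the same.
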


\begin{proof}
We prove the near-$1$ statement; the low-value statement is analogous.

Assume first that there exists $\varepsilon>0$ such that $S=T$ on $(1-\varepsilon,1)^2$. Let $\xi=[x_n]_{\Vcal}$ and $\eta=[y_n]_{\Vcal}$ belong to $\mu^-(1)$. Choose $k\ge2$ such that $1/k<\varepsilon$. Since $\xi,\eta\in\mu^-(1)$, the sets
\[
\{n:x_n\in(1-\varepsilon,1)\},
\qquad
\{n:y_n\in(1-\varepsilon,1)\}
\]
belong to $\Vcal$. On their intersection, also in $\Vcal$, we have $S(x_n,y_n)=T(x_n,y_n)$. Therefore ${}^{\ast}S(\xi,\eta)={}^{\ast}T(\xi,\eta)$.

Conversely, suppose that no such $\varepsilon$ exists. Then for every $m\ge2$ there exist $x_m,y_m\in(1-1/m,1)$ such that $S(x_m,y_m)\ne T(x_m,y_m)$. Define $\xi=[x_m]_{\Vcal}$ and $\eta=[y_m]_{\Vcal}$. For each fixed $k\ge2$, the set
\[
\{m:x_m,y_m\in(1-1/k,1)\}
\]
contains all sufficiently large $m$, and hence belongs to the non-principal ultrafilter $\Vcal$. Thus $\xi,\eta\in\mu^-(1)$. But $S(x_m,y_m)\ne T(x_m,y_m)$ for every $m\ge2$, so ${}^{\ast}S(\xi,\eta)\ne{}^{\ast}T(\xi,\eta)$. This contradicts monad equality. Hence standard local equality must hold on some interval $(1-\varepsilon,1)^2$.
\end{proof}

\begin{remark}
Theorem~\ref{thm:local-vs-monad} explains why the ultrapower viewpoint is not merely a change of language. It shows that equality on an infinitesimal monad is equivalent to equality on a sufficiently small standard endpoint neighbourhood.
\end{remark}

\begin{theorem}[Saturated realization of asymptotic regimes]
\label{thm:saturated-realization}
Assume that the ultrapower ${}^{\ast}\mathbb R$ is $\aleph_1$-saturated. Then both types
\[
p_1(x)=\left\{1-\frac1n<x<1:n\ge2\right\}
\]
and
\[
p_0(x)=\left\{0<x<\frac1n:n\ge2\right\}
\]
are realized in ${}^{\ast}[0,1]$. Equivalently,
\[
\mu^-(1)\ne\varnothing
\qquad\text{and}\qquad
\mu^+(0)\ne\varnothing.
\]
\end{theorem}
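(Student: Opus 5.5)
The plan is to verify that each of $p_1$ and $p_0$ is a countable type that is finitely satisfiable in the standard interval, and then apply $\aleph_1$-saturation; I will also note that saturation is not actually needed, since an explicit witness exists. I treat $p_1$ in detail; the argument for $p_0$ is symmetric under $x\mapsto 1-x$.

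\emph{Step 1: finite satisfiability.} Take a finite subset of $p_1(x)$, say the conditions indexed by $n_1,\dots,n_k$, and put $N=\max_i n_i$. Every real $x\in(1-1/N,1)$, for example $x=1-\frac1{2N}$, satisfies all $k$ conditions, because $1-1/n_i\le 1-1/N$. Hence every finite fragment of $p_1$ is realized in $[0,1]\subseteq\mathbb R$. By \L o\'s's theorem, applied to constant sequences, it is then realized in ${}^{\ast}[0,1]$. The parameters are the standard constants $1-1/n$ and $1$, so there are only countably many of them.

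\emph{Step 2: realization by saturation.} The type $p_1$ is a countable set of formulas over countably many parameters, and it is finitely satisfiable. Hence $\aleph_1$-saturation gives some $\xi\in{}^{\ast}\mathbb R$ realizing it. From $1-\frac12<\xi<1$ we get $\xi\in{}^{\ast}[0,1]$. The type conditions say exactly that $0<1-\xi<1/n$ for all $n\ge2$, so the realizations of $p_1$ in ${}^{\ast}[0,1]$ are precisely the elements of $\mu^-(1)$. This also establishes the stated equivalence: $p_1$ is realized if and only if $\mu^-(1)\neq\varnothing$. The same argument with $x=\frac1{2N}$ handles $p_0$ and $\mu^+(0)$.

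\emph{Step 3: the explicit witness.} Independently of saturation, the class $[1-1/(m+2)]_{\Vcal}$ realizes $p_1$, as noted in the earlier remark. Indeed, for fixed $n$ the set of indices $m$ at which the inequality holds is cofinite, and cofinite sets belong to the non-principal $\Vcal$. Likewise $[1/(m+2)]_{\Vcal}$ realizes $p_0$.

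No step is a genuine obstacle. The only care needed is to make the phrase ``realized in ${}^{\ast}[0,1]$'' precise: the type lives in the language of ordered fields with the standard constants, and one must check that the realizing element lies in the interval. The check $1/2<\xi<1$ in Step 2 settles this.
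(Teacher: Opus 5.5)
Your Steps 1--2 are exactly the paper's argument: show $p_1$ and $p_0$ are finitely satisfiable by picking a real $x\in(1-1/N,1)$ (resp.\ $x\in(0,1/N)$), then apply $\aleph_1$-saturation; your {\L}o\'s step and the check $\tfrac12<\xi<1$ make explicit what the paper leaves implicit. Your Step 3 is also correct and matches the paper's earlier remark on the classes of $1-1/(n+2)$ and $1/(n+2)$, so it shows the saturation hypothesis is not actually needed for $\mu^-(1)$ and $\mu^+(0)$ to be nonempty.
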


\begin{proof}
Every finite subset of $p_1(x)$ is satisfiable in $[0,1]$: if the finite subset involves only $1-1/n<x<1$ for $2\le n\le N$, choose any real number $x\in(1-1/N,1)$. Thus $p_1(x)$ is finitely satisfiable. By $\aleph_1$-saturation, it is realized in ${}^{\ast}[0,1]$.

The proof for $p_0(x)$ is identical. A finite subset only requires $0<x<1/n$ for finitely many $n$, say $2\le n\le N$. Choosing $x\in(0,1/N)$ realizes that finite subset. Saturation gives a realization of the full type.
\end{proof}

\begin{theorem}[Saturation principle for asymptotic identities]
\label{thm:saturation-principle}
Let $L$ be a countable first-order language expanding the language of ordered sets by constants for all rational numbers in $[0,1]$ and by function symbols for the five operations in $\Ctn$. Let $M$ be the corresponding structure with universe $[0,1]$, and let ${}^{\ast}M$ be an $\aleph_1$-saturated ultrapower.

Let $\Sigma(\bar x)$ be a countable set of $L$-formulas in variables $\bar x=(x_1,\dots,x_m)$. Assume that $\Sigma$ is finitely satisfiable arbitrarily close to $1$, in the following sense: for every finite $\Sigma_0\subseteq\Sigma$ and every $k\ge2$, there exists $\bar a\in(1-1/k,1)^m$ such that
\[
M\models \bigwedge_{\varphi\in\Sigma_0}\varphi(\bar a).
\]
Then there exists $\bar \xi=(\xi_1,\dots,\xi_m)\in\mu^-(1)^m$ such that
\[
{}^{\ast}M\models \varphi(\bar \xi)
\]
for every $\varphi\in\Sigma$. The analogous statement holds near $0$ with $\mu^+(0)$ in place of $\mu^-(1)$.
\end{theorem}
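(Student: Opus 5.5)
The plan is to reduce the statement to $\aleph_1$-saturation by adjoining countably many "closeness to $1$" conditions to $\Sigma$. Consider the enlarged set of formulas
\[
\Sigma'(\bar x)=\Sigma(\bar x)\cup\Bigl\{\,c_{n}<x_i \;:\; 1\le i\le m,\ n\ge2\Bigr\}\cup\Bigl\{\,x_i<1 \;:\; 1\le i\le m\Bigr\},
\]
where $c_n$ is the constant symbol for the rational $1-1/n$ and $1$ is the constant for $1$. Both are available in $L$, since $L$ contains constants for all rationals in $[0,1]$ and the order symbol. Because $\Sigma$ and the family of added formulas are countable, $\Sigma'$ is a countable set of $L$-formulas in the finitely many variables $\bar x$. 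It uses no parameters beyond the symbols of $L$, so it is a type over the empty set (or over countably many parameters). That is exactly what $\aleph_1$-saturation handles.

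Next I verify finite satisfiability of $\Sigma'$ in ${}^{\ast}M$. A finite $\Sigma_0'\subseteq\Sigma'$ consists of a finite $\Sigma_0\subseteq\Sigma$ together with finitely many conditions $1-1/n<x_i<1$, with $n\le N$ for some $N$. By hypothesis, applied with $k=N$, there is $\bar a\in(1-1/N,1)^m$ with $M\models\bigwedge_{\varphi\in\Sigma_0}\varphi(\bar a)$. Such an $\bar a$ automatically satisfies all the added inequalities, so $M\models\bigwedge\Sigma_0'(\bar a)$. By Łoś's theorem, the diagonal embedding $M\to{}^{\ast}M$ is elementary, so the constant class of $\bar a$ satisfies $\Sigma_0'$ in ${}^{\ast}M$. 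Hence $\Sigma'$ is finitely satisfiable in ${}^{\ast}M$.

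By $\aleph_1$-saturation, $\Sigma'$ is realized by some $\bar\xi\in({}^{\ast}[0,1])^m$. The added formulas give $1-1/n<\xi_i<1$ for every $n\ge2$. This is exactly $0<1-\xi_i<1/n$ for all $n\ge2$, so each $\xi_i\in\mu^-(1)$, and $\bar\xi$ satisfies every $\varphi\in\Sigma$. This is the same mechanism as in Theorem~\ref{thm:saturated-realization}, now carried out with $m$ variables and the extra formulas of $\Sigma$. The case near $0$ is identical, with the conditions $0<x_i<c'_n$, where $c'_n$ is the constant for $1/n$; it yields $\bar\xi\in\mu^+(0)^m$.

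The main point requiring care is making sure the saturation hypothesis applies. The monad conditions must be expressible as $L$-formulas, which is why the rational constants are needed. The interpretation of the symbols in ${}^{\ast}M$ must agree with the coordinatewise extensions ${}^{\ast}T$, which holds by construction of the ultrapower. Finally, we need Łoś's theorem to transfer finite satisfiability from $M$ to ${}^{\ast}M$, since saturation is usually stated for types finitely satisfiable in ${}^{\ast}M$ itself. Once these points are in place, the argument is routine.
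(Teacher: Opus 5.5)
Your proposal is correct and follows essentially the same route as the paper: adjoin the countably many endpoint inequalities $1-1/n<x_i<1$ to $\Sigma$, verify finite satisfiability by applying the hypothesis with $k$ large, and realize the resulting countable type by $\aleph_1$-saturation. Your explicit use of {\L}o\'s's theorem to transfer finite satisfiability from $M$ to ${}^{\ast}M$ is a step the paper leaves implicit, so your version is slightly more careful.
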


\begin{proof}
Consider the countable type
\[
q(\bar x)=
\Sigma(\bar x)\cup
\left\{1-\frac1n<x_i<1:n\ge2,\;1\le i\le m\right\}.
\]
We show that every finite subset of $q$ is satisfiable in $M$.

Let $q_0\subseteq q$ be finite. Then $q_0$ contains only finitely many formulas from $\Sigma$ and finitely many inequalities of the form $1-1/n<x_i<1$. Let $\Sigma_0$ be the finite subset of $\Sigma$ appearing in $q_0$, and choose $k\ge2$ large enough so that all endpoint inequalities in $q_0$ are implied by
\[
1-\frac1k<x_i<1
\qquad (1\le i\le m).
\]
By the finite satisfiability assumption, there exists $\bar a\in(1-1/k,1)^m$ realizing all formulas in $\Sigma_0$. Therefore $\bar a$ realizes $q_0$.

Thus $q$ is finitely satisfiable. Since ${}^{\ast}M$ is $\aleph_1$-saturated, $q$ is realized by some tuple $\bar\xi\in{}^{\ast}M^m$. The endpoint inequalities in $q$ ensure that $\bar\xi\in\mu^-(1)^m$, and the formulas from $\Sigma$ are all realized by $\bar\xi$.

The proof near $0$ is obtained by replacing the inequalities $1-1/n<x_i<1$ with $0<x_i<1/n$.
\end{proof}

\begin{remark}
Theorem~\ref{thm:saturation-principle} is the model-theoretic form of the asymptotic arguments used earlier. Instead of choosing a new sufficiently small neighbourhood for each finite calculation, saturation realizes all countably many compatible asymptotic requirements at once inside an ultrapower.
\end{remark}

\begin{remark}
We deliberately use classical ultrapowers rather than metric ultraproducts. The operations $T_{NM}$ and $T_D$ are not continuous, so treating all five operations as function symbols in continuous logic would require additional care. The classical ultrapower framework avoids this issue and is sufficient for the infinitesimal classifications proved above.
\end{remark}

\section{Residual implications and endpoint asymptotics}

We now record a consequence for fuzzy implications. This section is not intended to develop a full asymptotic theory of fuzzy implications. Its purpose is to show that the ultrafilter classification of $t$-norms naturally interacts with residual implications, which form one of the central classes of fuzzy implications studied in \cite{BaczynskiJayaram}.

\begin{definition}[Residual implication of a t-norm]
Let $T$ be a $t$-norm on $[0,1]$. Its residual implication is the operation
\[
I_T:[0,1]^2\to[0,1]
\]
defined by
\[
I_T(x,y)=\sup\{z\in[0,1]:T(x,z)\le y\}.
\]
\end{definition}

\begin{remark}
Residual implications are often called R-implications in the fuzzy implication literature. They are generated from fuzzy conjunctions by residuation and provide a standard bridge between $t$-norms and implication operations.
\end{remark}

\begin{proposition}[A residual implication consequence]
\label{prop:residual-consequence}
Let $\varepsilon\in(0,\frac12)$ and let
\[
A\subseteq[1-\varepsilon,1].
\]
Let $I_G$ and $I_{NM}$ be the residual implications associated with $T_G$ and $T_{NM}$, respectively. Then
\[
I_G(x,y)=I_{NM}(x,y)
\]
for all $(x,y)\in A\times A$. Consequently, if $A\in\Ucal$, then
\[
I_G\sim_{\Ucal} I_{NM}.
\]
\end{proposition}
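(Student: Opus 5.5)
The plan is to compute both residual implications explicitly on the relevant region and compare them directly. We cannot simply quote Proposition~\ref{prop:near-1-expansion}: there $T_G=T_{NM}$ only on $A\times A$. The supremum defining $I_T(x,y)$ ranges over \emph{all} $z\in[0,1]$, including small $z$ where $T_G$ and $T_{NM}$ genuinely differ. So the first step is to obtain closed forms for $I_G$ and $I_{NM}$ valid for arbitrary $(x,y)\in[0,1]^2$. Only afterwards will we restrict to $A\times A$.

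\textbf{Step 1: the Gödel implication.} We recover the standard formula: $I_G(x,y)=1$ if $x\le y$, and $I_G(x,y)=y$ if $x>y$. If $x\le y$, then $\min\{x,z\}\le x\le y$ for every $z$, so the supremum is $1$. If $x>y$, then $\min\{x,z\}\le y$ holds exactly when $z\le y$, so the supremum is $y$.

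\textbf{Step 2: the nilpotent minimum implication.} We aim for $I_{NM}(x,y)=1$ if $x\le y$, and $I_{NM}(x,y)=\max\{1-x,y\}$ if $x>y$. The case $x\le y$ is as before, since $T_{NM}\le T_G$. For $x>y$, split the admissible $z$ according to the branch of $T_{NM}$:
\begin{itemize}
\item if $x+z\le 1$, then $T_{NM}(x,z)=0\le y$, so every $z\in[0,1-x]$ is admissible;
\item if $x+z>1$, then $T_{NM}(x,z)=\min\{x,z\}$, and because $x>y$ this is $\le y$ exactly when $z\le y$.
\end{itemize}
Hence the admissible set is $[0,1-x]\cup\{z: 1-x<z\le y\}$, whose supremum is $\max\{1-x,y\}$. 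This case split, and checking that the supremum is unaffected by whether endpoints are attained, is the only step needing care. I expect it to be the main (mild) obstacle, precisely because it is where the non-local dependence on small $z$ enters.

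\textbf{Step 3: restriction to $A\times A$.} Take $x,y\in A\subseteq[1-\varepsilon,1]$. When $x\le y$, both implications equal $1$. When $x>y$, we have $1-x\le\varepsilon<\tfrac12<1-\varepsilon\le y$, so $\max\{1-x,y\}=y=I_G(x,y)$. Thus $I_G=I_{NM}$ on $A\times A$. If moreover $A\in\Ucal$, this set $A$ is itself a witness in the definition of $\sim_{\Ucal}$, giving $I_G\sim_{\Ucal}I_{NM}$.
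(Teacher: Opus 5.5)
Your proposal is correct and follows the paper's proof. The paper also uses the closed forms $I_G(x,y)=y$ and $I_{NM}(x,y)=\max\{1-x,y\}$ for $x>y$ (both equal to $1$ when $x\le y$), the chain $y\ge 1-\varepsilon>\varepsilon\ge 1-x$ on $A\times A$, and $A$ itself as the witness for $\sim_{\Ucal}$; the only difference is that the paper simply recalls these standard formulas, whereas you derive them from the supremum definition, which properly handles your correct observation that equality of $T_G$ and $T_{NM}$ on $A\times A$ does not by itself transfer to the residua, because the supremum ranges over all $z\in[0,1]$.
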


\begin{proof}
Recall that the residual implication of the G\"odel $t$-norm is
\[
I_G(x,y)=
\begin{cases}
1, & x\le y,\\
y, & x>y.
\end{cases}
\]
For the nilpotent minimum $t$-norm, the associated residual implication is
\[
I_{NM}(x,y)=
\begin{cases}
1, & x\le y,\\
\max\{1-x,y\}, & x>y.
\end{cases}
\]
Let $x,y\in A$. If $x\le y$, then both implications are equal to $1$.

Assume now that $x>y$. Since $x,y\in[1-\varepsilon,1]$, we have $y\ge 1-\varepsilon$. Also, $1-x\le \varepsilon$. Because $\varepsilon<1/2$, we have $1-\varepsilon>\varepsilon$. Thus
\[
y\ge 1-\varepsilon>\varepsilon\ge 1-x.
\]
Consequently, $\max\{1-x,y\}=y$. Therefore
\[
I_{NM}(x,y)=y=I_G(x,y).
\]
Hence $I_G=I_{NM}$ on $A\times A$, and the ultrafilter statement follows immediately from the definition of $\sim_{\Ucal}$.
\end{proof}

\begin{remark}[Meaning for fuzzy implications]
Proposition~\ref{prop:residual-consequence} shows that ultrafilter equivalence of $t$-norms can pass to at least some naturally associated fuzzy implications. In particular, the near-$1$ identification
\[
T_G\sim_{\Ucal}T_{NM}
\]
has a corresponding implication-level manifestation:
\[
I_G\sim_{\Ucal}I_{NM}.
\]
Thus the endpoint asymptotic skeleton obtained for $t$-norms may be useful for studying coarse asymptotic behavior of residual fuzzy implications.
\end{remark}

\begin{remark}
The result above is deliberately restricted to a concrete and transparent case. A full extension of the present quotient theory from $t$-norms to residual, $(S,N)$-, QL-, and other families of fuzzy implications would require separate analysis. The point is that the present paper supplies a possible organizing principle: one may first identify asymptotic types of fuzzy conjunctions and then ask which implication operations preserve, refine, or destroy these types.
\end{remark}

\section{The quotient category}

\begin{definition}[The quotient category]
Let $\CTNorm_{\Ucal}$ be the category defined as follows:
\begin{itemize}[leftmargin=2.2em]
\item the objects are the $\sim_{\Ucal}$-equivalence classes $[T]$ with $T\in\Ctn$;
\item for two objects $[S]$ and $[T]$,
\[
\operatorname{Hom}_{\CTNorm_{\Ucal}}([S],[T])=
\begin{cases}
\{\mathrm{id}_{[S]}\}, & [S]=[T],\\
\varnothing, & [S]\ne [T].
\end{cases}
\]
\end{itemize}
\end{definition}

\begin{remark}
Thus $\CTNorm_{\Ucal}$ is a discrete category. Its mathematical content is the quotient set of $\Ctn$ under $\sim_{\Ucal}$.
\end{remark}

\begin{theorem}[Main classification theorem]
\label{thm:main}
Let $\Ucal$ be an ultrafilter on $[0,1]$.
\begin{enumerate}[label=\textnormal{(\arabic*)}]
\item If $\Ucal$ is non-principal and concentrated near $1$, then
\[
\CTNorm_{\Ucal}\cong \{\,\Min,\Lin,\Prod,\Dr\,\}
\]
as a discrete category. Here $\Min$ is represented by $T_G$ and $T_{NM}$, while $\Lin$, $\Prod$, and $\Dr$ are represented by $T_L$, $T_P$, and $T_D$, respectively.
\item If $\Ucal$ is non-principal and concentrated near $0$, then
\[
\CTNorm_{\Ucal}\cong \{\,\Zero,\Min,\Prod\,\}
\]
as a discrete category. Here $\Zero$ is represented by $T_L,T_{NM},T_D$, while $\Min$ and $\Prod$ are represented by $T_G$ and $T_P$, respectively.
\end{enumerate}
In particular, for every ultrafilter $\Ucal$, the quotient category $\CTNorm_{\Ucal}$ is finite and has at most five objects.
\end{theorem}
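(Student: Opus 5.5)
The theorem is essentially a repackaging of the earlier classification results into the language of the discrete category $\CTNorm_{\Ucal}$. I will first observe a general fact. A discrete category is determined up to isomorphism by its set of objects. Any bijection between the object sets of two discrete categories extends uniquely to an isomorphism: identities go to identities, and there are no other morphisms. So in each case it suffices to exhibit a bijection between the quotient set $\Ctn/\!\sim_{\Ucal}$ and the given label set, compatible with the stated representatives.

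For part (1), I will invoke Corollary~\ref{cor:fourclasses}. For a non-principal ultrafilter concentrated near $1$, it says the quotient consists of exactly the four classes $[T_G]=[T_{NM}]$, $[T_L]$, $[T_P]$, $[T_D]$. The equality $[T_G]=[T_{NM}]$ comes from Proposition~\ref{prop:near-1-expansion} applied to $A_\varepsilon=(1-\varepsilon,1)\in\Ucal$. Pairwise distinctness is Proposition~\ref{prop:pairwise}. The map $[T_G]\mapsto\Min$, $[T_L]\mapsto\Lin$, $[T_P]\mapsto\Prod$, $[T_D]\mapsto\Dr$ is then well defined because the only identification is $T_G\sim T_{NM}$, both sent to $\Min$. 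It is injective by pairwise distinctness and visibly surjective.

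For part (2), I will invoke Proposition~\ref{prop:low-classes} in the same way. It gives $[T_L]=[T_{NM}]=[T_D]$ via the low-value collapse, Proposition~\ref{prop:low-collapse}, applied to $(0,\delta)$ with $\delta\le\frac12$. It also shows that this class, $[T_G]$ and $[T_P]$ are pairwise distinct. Sending the zero class to $\Zero$, $[T_G]\mapsto\Min$ and $[T_P]\mapsto\Prod$ is then a bijection onto the three-element label set.

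The final sentence holds for arbitrary $\Ucal$ and needs no regime analysis. The quotient map $\Ctn\to\Ctn/\!\sim_{\Ucal}$ is surjective, and $|\Ctn|=5$, so there are at most five objects. The step I would be most careful about is the non-principality bookkeeping. The remark after the concentration definition is needed to place $(1-\varepsilon,1)$ or $(0,\delta)$ in $\Ucal$, which is exactly what the cited separation arguments use. Otherwise the proof is a direct assembly of earlier results, with no genuine obstacle.
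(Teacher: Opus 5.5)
Your proposal is correct and follows the same route as the paper: part (1) is read off from Corollary~\ref{cor:fourclasses}, part (2) from Proposition~\ref{prop:low-classes}, and the final bound from the fact that the object set is a quotient of the five-element set $\Ctn$. Your extra remarks, that a bijection of object sets gives an isomorphism of discrete categories and the check on non-principality, only make explicit what the paper leaves implicit.
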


\begin{proof}
If $\Ucal$ is non-principal and concentrated near $1$, the statement follows from Corollary~\ref{cor:fourclasses}. The labels $\Min$, $\Lin$, $\Prod$, and $\Dr$ are simply names for the four quotient classes.

If $\Ucal$ is non-principal and concentrated near $0$, the statement follows from Proposition~\ref{prop:low-classes}.

For an arbitrary ultrafilter $\Ucal$, the object set of $\CTNorm_{\Ucal}$ is a quotient of the five-element set
\[
\Ctn=\{T_G,T_L,T_P,T_{NM},T_D\}.
\]
Hence it has at most five elements, and the associated discrete category is finite.
\end{proof}

\section{Comparison with pointwise equality and ordinal-sum theory}

We next compare ultrafilter equivalence with finer equivalence relations.

\begin{proposition}
\label{prop:pointwise-coarser}
Ultrafilter equivalence is strictly coarser than pointwise equality on $\Ctn$.
\end{proposition}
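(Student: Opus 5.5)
The statement has two halves. First, pointwise equality implies ultrafilter equivalence. Second, there exist ultrafilter-equivalent operations in $\Ctn$ that are not pointwise equal.

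For the first half, suppose $S=T$ pointwise. Then $S=T$ on $[0,1]\times[0,1]$, and $[0,1]\in\Ucal$ for every ultrafilter $\Ucal$. Hence $S\sim_{\Ucal}T$ for every ultrafilter $\Ucal$ on $[0,1]$, exactly as in the reflexivity argument for $\sim_{\Ucal}$.

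For strictness, I will fix a non-principal ultrafilter $\Ucal$ concentrated near $1$. Its existence follows by extending the filter generated by the sets $(1-\varepsilon,1)$, $\varepsilon>0$, to an ultrafilter; this filter base has the finite intersection property. Proposition~\ref{prop:near-1-expansion}, or Proposition~\ref{prop:pairwise}, gives $T_G\sim_{\Ucal}T_{NM}$. However, $T_G\neq T_{NM}$ pointwise: at $(x,y)=(\tfrac12,\tfrac12)$ we have $x+y=1$, so
\[
T_{NM}(\tfrac12,\tfrac12)=0\neq\tfrac12=T_G(\tfrac12,\tfrac12).
\]

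A second witness is available in the low-value regime. For a non-principal ultrafilter concentrated near $0$, Proposition~\ref{prop:low-classes} gives $T_L\sim_{\Ucal}T_D$. On the other hand, $T_L(0.9,0.9)=0.8$ while $T_D(0.9,0.9)=0$, so $T_L\neq T_D$ pointwise.

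The only point needing any care is making sure that an ultrafilter of the required type actually exists. This is the standard Zorn/ultrafilter-lemma extension of a filter base, so the claim should go through without real obstacles.
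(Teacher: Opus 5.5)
Your proof is correct and follows the paper's argument: you take $A=[0,1]$ for the direction from pointwise equality to $\sim_{\Ucal}$, and for strictness you use the near-$1$ identification $T_G\sim_{\Ucal}T_{NM}$ together with a point where $x+y\le 1$ (you use $(\tfrac12,\tfrac12)$, the paper uses $(\tfrac14,\tfrac14)$). Your argument for the existence of a non-principal ultrafilter concentrated near $1$ and your second witness $T_L\sim_{\Ucal}T_D$ in the low-value regime are valid additions that the paper does not include.
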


\begin{proof}
If two operations are pointwise equal on $[0,1]^2$, then they are $\Ucal$-equivalent for every ultrafilter $\Ucal$, by taking $A=[0,1]$.

The converse fails. Let $\Ucal$ be a non-principal ultrafilter concentrated near $1$. Then $[T_G]=[T_{NM}]$ by Proposition~\ref{prop:pairwise}. However, $T_G$ and $T_{NM}$ are not pointwise equal on $[0,1]^2$. For example,
\[
T_G\left(\frac14,\frac14\right)=\frac14,
\]
whereas
\[
T_{NM}\left(\frac14,\frac14\right)=0.
\]
Thus ultrafilter equivalence is strictly coarser than pointwise equality.
\end{proof}

\begin{remark}[Relation with ordinal sums]
The classical structure theory of continuous $t$-norms is based on ordinal sums and additive generators. Since the present paper includes non-continuous examples such as $T_{NM}$ and $T_D$, and since we restrict attention to the five operations in $\Ctn$, we do not formulate a general theorem comparing $\sim_{\Ucal}$ with ordinal-sum equivalence for all continuous $t$-norms.

Nevertheless, Proposition~\ref{prop:pointwise-coarser} illustrates the intended coarse-graining phenomenon: ultrafilter equivalence forgets global distinctions that disappear on a large asymptotic set.
\end{remark}

\begin{remark}[Relation with Mostert--Shields]
Mostert and Shields \cite{MostertShields} studied the structure of semigroups on compact manifolds with boundary. Their theorem is a global structural result for topological semigroups. Our approach is different: it extracts a large asymptotic regime by means of an ultrafilter and then compares the induced restrictions of finitely many operations.

Thus the present classification should not be viewed as a direct consequence of the Mostert--Shields theorem. Rather, it is an ultrafilter-based analogue of the general idea that global structure may be simplified by passing to local or asymptotic components.
\end{remark}

\section{Functorial interpretation}

\begin{definition}
Let $\CTNorm$ be the discrete category whose objects are the five operations
\[
T_G,
\quad
T_L,
\quad
T_P,
\quad
T_{NM},
\quad
T_D.
\]
Let $\FiveTypes$ be the discrete category whose objects are the formal symbols
\[
\Min,
\quad
\Lin,
\quad
\Prod,
\quad
\Dr,
\quad
\Zero.
\]
\end{definition}

\begin{proposition}[Near-$1$ reduction functor]
Let $\Ucal$ be a non-principal ultrafilter concentrated near $1$. There is a functor
\[
F_1:\CTNorm\longrightarrow \FiveTypes
\]
defined on objects by
\[
F_1(T)=
\begin{cases}
\Min, & T=T_G\text{ or }T=T_{NM},\\
\Lin, & T=T_L,\\
\Prod, & T=T_P,\\
\Dr, & T=T_D.
\end{cases}
\]
It factors through the quotient category $\CTNorm_{\Ucal}$. It is not injective on objects, but it is faithful.
\end{proposition}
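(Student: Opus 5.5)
Because both $\CTNorm$ and $\FiveTypes$ are discrete, a functor between them is nothing more than a map on objects, and the only morphisms it has to send anywhere are identities. Checking functoriality, factorization, non-injectivity and faithfulness therefore reduces to bookkeeping on objects.

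\emph{Well-definedness and functoriality.} The displayed case distinction assigns exactly one symbol of $\FiveTypes$ to each of the five objects $T_G,T_L,T_P,T_{NM},T_D$, so $F_1$ is well defined on objects. For morphisms, the only arrows in $\CTNorm$ are the identities $\mathrm{id}_T$, and we set $F_1(\mathrm{id}_T)=\mathrm{id}_{F_1(T)}$. Preservation of identities holds by definition. Composition is preserved because the only composable pairs are $\mathrm{id}_T\circ\mathrm{id}_T$.

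\emph{Factorization.} We define $\bar F_1:\CTNorm_{\Ucal}\to\FiveTypes$ by $\bar F_1([T])=F_1(T)$, and we need this to be independent of the representative. By Corollary~\ref{cor:fourclasses}, the $\sim_{\Ucal}$-classes for a non-principal near-$1$ ultrafilter are exactly $\{T_G,T_{NM}\}$, $\{T_L\}$, $\{T_P\}$ and $\{T_D\}$. The only nontrivial identification is $T_G\sim_{\Ucal}T_{NM}$, and $F_1$ sends both of these to $\Min$, so $\bar F_1$ is well defined. We also need the quotient functor $Q:\CTNorm\to\CTNorm_{\Ucal}$, $T\mapsto[T]$, with $Q(\mathrm{id}_T)=\mathrm{id}_{[T]}$; it is a functor by the same reasoning as before. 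Then $F_1=\bar F_1\circ Q$ holds on objects, and hence on identity morphisms as well. By Proposition~\ref{prop:pairwise} the four classes are distinct, so $\bar F_1$ is injective and is an isomorphism onto the full subcategory $\{\Min,\Lin,\Prod,\Dr\}$. This recovers Theorem~\ref{thm:main}(1).

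\emph{Non-injectivity and faithfulness.} Non-injectivity on objects is immediate, since $T_G\neq T_{NM}$ while $F_1(T_G)=F_1(T_{NM})=\Min$. For faithfulness we must check that each map $\operatorname{Hom}(S,T)\to\operatorname{Hom}(F_1S,F_1T)$ is injective. Each hom-set in $\CTNorm$ is either empty or a singleton, and any map out of a set with at most one element is injective. The only point worth stating explicitly is that faithfulness does not require injectivity on objects. The hom-set $\operatorname{Hom}(T_G,T_{NM})$ is empty, even though $\operatorname{Hom}(\Min,\Min)$ is nonempty, and this is harmless for faithfulness. It is, however, exactly why $F_1$ fails to be full.
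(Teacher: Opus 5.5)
Your proposal is correct and follows essentially the same route as the paper: both categories are discrete, so the functor reduces to its object map; the factorization comes from the near-$1$ classification (Corollary~\ref{cor:fourclasses}); and faithfulness holds because every hom-set has at most one element. Your extra observations, that the induced functor on $\CTNorm_{\Ucal}$ is injective on objects and that $F_1$ is not full because $\operatorname{Hom}(T_G,T_{NM})=\varnothing$, are also correct, though the statement does not require them.
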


\begin{proof}
Since $\CTNorm$ and $\FiveTypes$ are discrete categories, a map on objects determines a unique functor by sending each identity morphism to the identity morphism of its image.

If $S\sim_{\Ucal}T$, then $S$ and $T$ represent the same quotient class. By the near-$1$ classification, this implies $F_1(S)=F_1(T)$. Hence $F_1$ factors through $\CTNorm_{\Ucal}$.

The functor is not injective on objects because $F_1(T_G)=F_1(T_{NM})=\Min$ while $T_G\ne T_{NM}$. On the other hand, since the source and target categories are discrete, every hom-set is either empty or a singleton. Therefore the induced map on every hom-set is injective, and $F_1$ is faithful.
\end{proof}

\begin{proposition}[Low-value reduction functor]
Let $\Ucal$ be a non-principal ultrafilter concentrated near $0$. There is a functor
\[
F_0:\CTNorm\longrightarrow \FiveTypes
\]
defined on objects by
\[
F_0(T)=
\begin{cases}
\Zero, & T=T_L,\; T=T_{NM},\text{ or }T=T_D,\\
\Min, & T=T_G,\\
\Prod, & T=T_P.
\end{cases}
\]
It factors through the quotient category $\CTNorm_{\Ucal}$.
\end{proposition}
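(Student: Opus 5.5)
The plan mirrors the argument for the near-$1$ reduction functor $F_1$. There are three things to establish: that $F_0$ is a functor, that its values are well defined on quotient classes, and that it factors through $\CTNorm_{\Ucal}$.

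\emph{Functoriality.} Both $\CTNorm$ and $\FiveTypes$ are discrete categories. So any assignment on objects extends uniquely to a functor by sending each identity morphism $\mathrm{id}_T$ to $\mathrm{id}_{F_0(T)}$. Composition and identities are preserved automatically, since the only morphisms present are identities. Nothing about $\Ucal$ is needed at this stage.

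\emph{Factorization.} Define $\bar F_0:\CTNorm_{\Ucal}\to\FiveTypes$ by $\bar F_0([T])=F_0(T)$, and let $q:\CTNorm\to\CTNorm_{\Ucal}$ be the quotient functor $T\mapsto[T]$. Then $F_0=\bar F_0\circ q$ holds by construction. The real content is checking that $\bar F_0$ is well defined, that is:
\[
S\sim_{\Ucal}T\quad\Longrightarrow\quad F_0(S)=F_0(T)\qquad\text{for } S,T\in\Ctn.
\]
Proposition~\ref{prop:low-classes} gives this directly, because $\Ucal$ is non-principal and concentrated near $0$. Its conclusion is that the $\sim_{\Ucal}$-classes are exactly
\[
\{T_L,T_{NM},T_D\},\qquad \{T_G\},\qquad \{T_P\}.
\]
These coincide with the fibres of $F_0$ over $\Zero$, $\Min$ and $\Prod$. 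So equivalent operations always receive the same label.

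\emph{Main obstacle.} The step that needs care is the direction of the implication. We must show that $\sim_{\Ucal}$-equivalent operations get equal labels, which relies on the separation half of Proposition~\ref{prop:low-classes}: $[T_G]$, $[T_P]$ and the zero class are pairwise distinct. Without separation, $F_0$ could collapse too little, i.e.\ assign different labels to equivalent operations. With it, each class maps to a single label. Under the hypothesis, that proposition supplies exactly this separation, so the step closes.

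As an optional remark, one could add that $F_0$ is faithful but not injective on objects, by the same reasoning as for $F_1$. Hom-sets in discrete categories are empty or singletons, so faithfulness is automatic, and $F_0(T_L)=F_0(T_D)$ shows non-injectivity.
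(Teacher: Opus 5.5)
Your proposal is correct and follows the paper's own argument. Functoriality is automatic because both categories are discrete, and well-definedness on $\sim_{\Ucal}$-classes comes from Proposition~\ref{prop:low-classes}, exactly as the paper obtains it by reusing the proof for $F_1$. Your observation that only the separation half of that proposition is needed for the factorization is accurate and slightly sharper than the paper's phrasing; the collapse $T_L\sim_{\Ucal}T_{NM}\sim_{\Ucal}T_D$ matters only for injectivity of the induced functor on objects.
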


\begin{proof}
The proof is identical to that of the previous proposition, using Proposition~\ref{prop:low-classes} instead of Proposition~\ref{prop:pairwise}.
\end{proof}

\section{Concluding remarks}

We have studied five classical $t$-norms through ultrafilter equivalence. The near-$1$ regime yields four distinct equivalence classes, while the low-value regime collapses three of the operations to the zero type. These computations give a finite quotient category and provide a concise asymptotic skeleton for the five examples.

Goldbring's general framework for ultrafilters clarifies the role of large sets, ultrafilter quantifiers, induced ultrafilters, pushforward ultrafilters, ultrapowers, and saturation in our construction \cite{Goldbring}. In particular, the ultrapower interpretation shows that the two endpoint regimes become exact algebraic phenomena on infinitesimal monads, while saturation gives a compactness principle for countable asymptotic requirements.

The relationship with fuzzy implications is also natural. Since many important fuzzy implications are generated from $t$-norms, especially through residuation, the ultrafilter quotient of $t$-norms may induce a corresponding asymptotic quotient of residual implications. Proposition~\ref{prop:residual-consequence} gives a first elementary example: the residual implications associated with the G\"odel and nilpotent minimum $t$-norms become indistinguishable in the near-$1$ regime. This connects the present finite asymptotic skeleton with the algebraic and analytical theory of fuzzy implications developed in \cite{BaczynskiJayaram}.

The results are intentionally limited in scope. They do not constitute a classification of all continuous $t$-norms. A natural next problem is to determine whether analogous finite asymptotic skeletons can be extracted from larger classes of $t$-norms, such as ordinal sums of continuous Archimedean components or broader families of left-continuous $t$-norms. Another direction is to study whether similar ultrafilter quotients exist for residual, $(S,N)$-, QL-, and other families of fuzzy implications.

\end{document}